\newenvironment{rktableau}[2][1.35]
	{\def\arraystretch{#1}\array{#2}}
	{\endarray}
\newcommand{\methodstretch}{\def\arraystretch{1.4}}
\def\*#1{\mathbf{#1}}
\def\^#1{\widehat{#1}}
\def\-#1{\overline{#1}}
\def\~#1{\widetilde{#1}}
\DeclareMathOperator{\diag}{diag}
\newcommand{\eye}[1]{I_{#1 \times #1}}
\renewcommand{\Re}{\mathbbm{R}}
\newcommand{\one}{\mathbbm{1}}
\NewDocumentCommand{\scalarstab}{O{\rho} O{\alpha}}{\mathcal{S}^{\textsc{1d}}_{#1,#2}}
\NewDocumentCommand{\matstab}{O{\rho} O{\alpha}}{\mathcal{S}^{\textsc{2d}}_{#1,#2}}
\newcommand{\fast}{\mathfrak{f}}
\newcommand{\slow}{\mathfrak{s}}
\newcommand{\pre}{\mathfrak{p}}
\newcommand{\cor}{\mathfrak{c}}
\NewDocumentCommand{\component}{m m m}{^{ \left\{ #1 \right\} #2 \IfBooleanT{#3}{T} }}
\NewDocumentCommand{\comp}{m O{} s}{\component{#1}{#2}{#3}}
\NewDocumentCommand{\F}{O{} s}{\component{\fast}{#1}{#2}}
\NewDocumentCommand{\FL}{O{\lambda} O{} s}{\component{\fast, #1}{#2}{#3}}
\RenewDocumentCommand{\S}{O{} s}{\component{\slow}{#1}{#2}}
\NewDocumentCommand{\SL}{O{\lambda} O{} s}{\component{\slow, #1}{#2}{#3}}
\NewDocumentCommand{\FF}{O{} s}{\component{\fast, \fast}{#1}{#2}}
\NewDocumentCommand{\FS}{O{} s}{\component{\fast, \slow}{#1}{#2}}
\NewDocumentCommand{\FSL}{O{\lambda} O{} s}{\component{\fast, \slow, #1}{#2}{#3}}
\NewDocumentCommand{\SF}{O{} s}{\component{\slow, \fast}{#1}{#2}}
\NewDocumentCommand{\SFL}{O{\lambda} O{} s}{\component{\slow, \fast, #1}{#2}{#3}}
\RenewDocumentCommand{\SS}{O{} s}{\component{\slow, \slow}{#1}{#2}}
\DeclareMathAlphabet{\mathbase}{OT1}{pzc}{m}{n}
\NewDocumentCommand{\Abase}{O{} s}{\mathbase{A}\component{\slow,\slow}{#1}{#2}}
\NewDocumentCommand{\abase}{O{} s}{\mathbase{a}\component{\slow}{#1}{#2}}
\NewDocumentCommand{\bbase}{O{} s}{\mathbase{b}\component{\slow}{#1}{#2}}
\NewDocumentCommand{\bhatbase}{O{} s}{\widehat{\mathbase{b}}\component{\slow}{#1}{#2}}
\NewDocumentCommand{\cbase}{O{} s}{\mathbase{c}\component{\slow}{#1}{#2}}
\NewDocumentCommand{\deltac}{O{} s}{{\Delta \mathbase{c}}\component{\slow}{#1}{#2}}
\NewDocumentCommand{\deltaC}{O{} s}{{\Delta \mathbase{C}}\component{\slow}{#1}{#2}}
\NewDocumentCommand{\Dbase}{O{} s}{\mathbase{D}\component{\slow}{#1}{#2}}
\NewDocumentCommand{\Tbase}{O{} s}{\mathbase{T}\component{\slow}{#1}{#2}}
\newcommand{\SPCMethod}{step predictor-corrector MRI-GARK}
\newcommand{\IPCMethod}{internal stage predictor-corrector MRI-GARK}
\newcommand{\SPCabbv}{SPC-MRI-GARK}
\newcommand{\IPCabbv}{IPC-MRI-GARK}
\pgfplotsset{compat=1.14}
\pgfplotsset{every axis/.append style={
	width=\linewidth,
	legend style={font=\tiny},
	x label style={font=\small},
	y label style={font=\small},
	tick label style={font=\small}
}}
\newcommand{\orderplot}[1]{%
	\pgfplotstableread[col sep=comma]{./Data/#1}{\table}
	\pgfplotstablegetcolsof{\table}
	\pgfmathtruncatemacro\numberofcols{\pgfplotsretval-1}
	\pgfplotsinvokeforeach{1,...,\numberofcols}{
		\pgfplotstablegetcolumnnamebyindex{##1}\of{\table}\to{\colname}
		\addplot table [y index=##1] {\table};
		\addlegendentryexpanded{\colname}
	}
}
\newcommand{\perfplot}[2]{%
	\begin{tikzpicture}
		\begin{loglogaxis}[
				xlabel={Error},
				ylabel={CPU time (s)},
				legend entries={#2},
				legend columns=2,
				legend style={at={(1,1.05)},anchor=south east}
			]
			\addplot table[col sep=comma,x index=1,y index=0] {./Data/#1};
			\addplot table[col sep=comma,x index=3,y index=2] {./Data/#1};
			\addplot table[col sep=comma,x index=5,y index=4] {./Data/#1};
		\end{loglogaxis}
	\end{tikzpicture}
}
\newcommand{\scalarstabplot}[2]{%
	\begin{subfigure}{{\ifreport 0.3 \else 0.23 \fi}\textwidth}
		\includegraphics[width=\linewidth]{{#1}/scalar_stability}
		\caption{#2}
	\end{subfigure}
}
\newcommand{\matstabplot}[2]{%
	\begin{subfigure}{{\ifreport 0.9 \else 0.3 \fi}\textwidth}
		\ifreport
			\includegraphics[width=\linewidth]{{#1}/matrix_stability}
		\else
			\includegraphics[width=\linewidth,trim=14cm 0 14cm 1.45cm,clip]{{#1}/matrix_stability}
		\fi
		\caption{#2}
	\end{subfigure}
}
\newif\ifreport
\title{Coupled Multirate Infinitesimal GARK Schemes for Stiff Systems with Multiple Time Scales\thanks{Submitted to the editors June 6, 2019
\funding{This work was funded by awards NSF CCF--1613905, NSF ACI--1709727, AFOSR DDDAS 15RT1037, and by the Computational Science Laboratory at Virginia Tech}}}
\author{
	Steven Roberts\thanks{
		Virginia Polytechnic Institute and State University, 
		Computational Science Laboratory, Department of Computer 
		Science, 2202 Kraft Drive, Blacksburg, VA 24060, USA
		(\email{steven94@vt.edu}, \email{sarshar@vt.edu}, \email{sandu@cs.vt.edu})
	}
	\and Arash Sarshar\footnotemark[2]
	\and Adrian Sandu\footnotemark[2]
}
\begin{document}
	
\ifreport
	\csltitle{Coupled Multirate Infinitesimal GARK Schemes for Stiff Systems with Multiple Time Scales}
	\cslauthor{Steven Roberts, Arash Sarshar, and Adrian Sandu}
	\cslemail{steven94@vt.edu, sarshar@vt.edu, sandu@cs.vt.edu}
	\cslreportnumber{7}
	\cslyear{18}
	\csltitlepage
\fi

\maketitle
\begin{abstract}
Traditional time discretization methods use a single timestep for the entire system of interest and can perform poorly when the dynamics of the system exhibits a wide range of time scales. Multirate infinitesimal step (MIS) methods (Knoth and Wolke, 1998) offer an elegant and flexible approach to efficiently integrate such systems. The slow components are discretized by a Runge--Kutta method, and the fast components are resolved by solving modified fast differential equations. Sandu (2018) developed the Multirate Infinitesimal General-structure Additive Runge--Kutta (MRI-GARK) family of methods that includes traditional MIS schemes as a subset. The MRI-GARK framework allowed the construction of the first fourth order MIS schemes. This framework also enabled the introduction of implicit methods, which are decoupled in the sense that any implicitness lies entirely within the fast or slow integrations. It was shown by Sandu that the stability of decoupled implicit MRI-GARK methods has limitations when both the fast and slow components are stiff and interact strongly. This work extends the MRI-GARK framework by introducing coupled implicit methods to solve stiff multiscale systems. The coupled approach has the potential to considerably improve the overall stability of the scheme, at the price of requiring implicit stage calculations over the entire system. Two coupling strategies are considered. The first computes coupled Runge--Kutta stages before solving a single differential equation to refine the fast solution. The second alternates between computing coupled Runge--Kutta stages and solving fast differential equations. We derive order conditions and perform the stability analysis for both strategies. The new coupled methods offer improved stability compared to the decoupled MRI-GARK schemes. The theoretical properties of the new methods are validated with numerical experiments.
\end{abstract}
\begin{keywords}
	multirate time integration, general-structure additive Runge--Kutta methods, multiscale dynamics. 
\end{keywords}
\begin{AMS}
  65L05, 65L06
\end{AMS}

\section{Introduction}

In this paper, we consider the additively partitioned ordinary differential equation (ODE)
\begin{equation} 
	\label{eqn:multirate_additive_ode}
	 y'= f(t,y) = f\F(t,y) + f\S(t,y), \qquad y(t_0)=y_0 \in \Re^{d},
\end{equation}
where component $f\F$ represents the fast dynamics of the system, and component $f\S$ the slow dynamics.  This structure models a feature appearing in many dynamical systems of practical interest: multiple characteristic time scales.

Multirate time integration methods are designed to efficiently solve \cref{eqn:multirate_additive_ode} by using different timesteps for the fast and slow components.  First explored by Rice \cite{Rice_1960} and Andrus \cite{Andrus_1979,Andrus_1993}, the multirating strategy has been expanded to numerous types of traditional time integration methods.  This includes Runge--Kutta methods \cite{Sandu_2007_MR_RK2,Guenther_2001_MR-PRK,Guenther_1994_partition-circuits,Kvaerno_2000_stability-MRK,Kvaerno_1999_MR-RK,Sandu_2019_MR-GARK_High-Order,roberts2019implicit}, 
linear multistep methods  \cite{Gear_1984_MR-LMM,Kato_1999,Sandu_2009_MR_LMM},
Rosenbrock-W methods  \cite{Guenther_1997_ROW},
extrapolation methods \cite{Sandu_2013_extrapolatedMR,Engstler_1997_MR-extrapolation}, Galerkin discretizations \cite{Logg_2003_MAG1}, and combined multiscale methodologies \cite{Engquist_2005}.

Multirate infinitesimal step (MIS) methods, first proposed by Knoth and Wolke \cite{Knoth_1998_MR-IMEX}, and later extended by others \cite{Knoth_2014_MR-Euler,Schlegel_2009_RFSMR,Schlegel_2010_MR-imex,Schlegel_2011_MR-implementation,Wensch_2009_MIS}, introduce a new multirating philosophy in which the fast method solves a modified ODE that advances the solution between slow stages. While the slow system is solved discretely, the fast system can be solved with arbitrary small steps, hence the naming ``infinitesimal step''.  In \cite{Sandu_2016_GARK-MR}, G\"{u}nther and Sandu cast MIS methods into the General-structure Additive Runge--Kutta (GARK) framework.  This framework was subsequently leveraged by Sandu in \cite{Sandu_2018_MRI-GARK} to create the multirate infinitesimal GARK (MRI-GARK) class of methods. One step of an MRI-GARK method advances the solution from $t_n$ to $t_n + H$ by
\begin{subequations}
	\label{eqn:MRI-GARK-decoupled}
	\begin{align}
		\label{eqn:MRI-GARK-decoupled-first_stage}
		& Y_1 = y_n \\
		\label{eqn:MRI-GARK-decoupled-internal_ode}
		& \left\{ \begin{aligned}
			v_i(0) &= Y_{i}, \\
			T_{i} &= t_n + \cbase_{i} \, H, \\
			v_i' &= \deltac_{i} \, f\F \mleft(T_{i} + \deltac_{i} \, \theta, v_i \mright) + \sum_{j=1}^{i+1} \gamma_{i,j} \mleft( \tfrac{\theta}{H} \mright) \, f\S\mleft(T_j,Y_j\mright), \\
			& \quad \text{for } \theta \in [0, H], \\
			Y_{i+1} &= v_i(H), \qquad i = 1, \dots, s\S,
		\end{aligned} \right. \\
		\label{eqn:MRI-GARK-decoupled-solution}
		& y_{n+1} = Y_{s\S + 1},
	\end{align}
\end{subequations}
where $\cbase$ are the slow method abscissae, and the modified fast ODEs  $v_i' = \dots$ advance the solution between the slow stages. In \cite{Sandu_2018_MRI-GARK}, Sandu presents MRI-GARK methods \cref{eqn:MRI-GARK-decoupled} of orders up to four that are explicit or implicit in the fast and slow systems but are not coupled across partitions. This work also provides new techniques in investigating the stability of partitioned methods that we have adopted in our paper. Recent developments in the field include the work of Sexton and Reynolds \cite{Sexton_2018_RMIS} where a new structure for fast integration weights is considered and shown to help reduce order conditions; the ``relaxed MIS" methods derived retain the same order as traditional MIS and it is possible to pair them for error control and adaptivity purposes.

This work extends the MRI-GARK family \cite{Sandu_2018_MRI-GARK} to include implicit methods with coupling between slow and fast systems. We construct two new families of schemes designed to offer improved stability compared to decoupled MRI-GARK methods. The first family is \SPCMethod{} methods that perform a coupled discrete prediction over the entire timestep, then use that information to perform an infinitesimal step correction with small timesteps. This is similar to the strategy used by Rice in \cite{Rice_1960} and Savcenco in \cite{Savcenco_2007_stability,Savcenko_2007_MR-hyperbolic}.  The second family is \IPCMethod{} methods that alternate between coupled discrete predictor stages and infinitesimal step correction ones. \Cref{fig:MRI-GARK_dependencies} shows the differences between the new strategies and previous approaches. Order condition theories and stability analyses are developed for both new families of methods, and schemes up to order four are designed. Numerical experiments are employed to verify the theoretical findings.

\begin{figure}
	\centering
	\begin{subfigure}{.47\linewidth}
		\includegraphics[width=\linewidth]{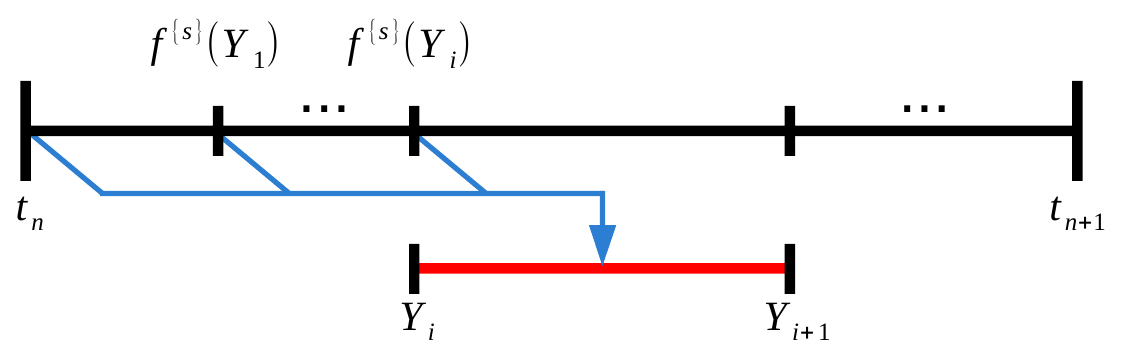}
		\caption{Traditional multirate infinitesimal schemes use previously computed stages, which means the slow tendencies are extrapolated in the formulation of modified fast systems.}
		\label{fig:MRI-GARK_dependencies:traditional}
	\end{subfigure}
	\hfill
	\begin{subfigure}{.47\linewidth}
		\includegraphics[width=\linewidth]{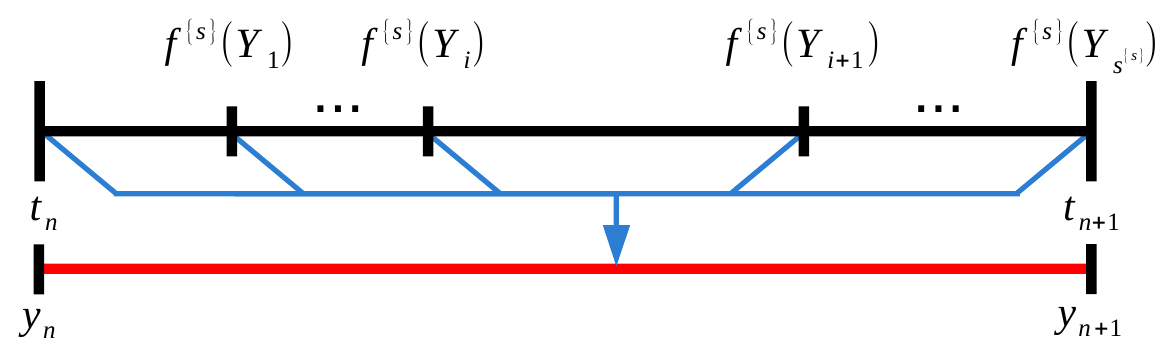}
		\caption{The newly proposed \SPCMethod{} schemes use all stages for computing slow tendencies and solve a single fast ODE over the entire step.}
		\label{fig:MRI-GARK_dependencies:SPC}
	\end{subfigure}
	\hfill
	\begin{subfigure}{.47\linewidth}
		\includegraphics[width=\linewidth]{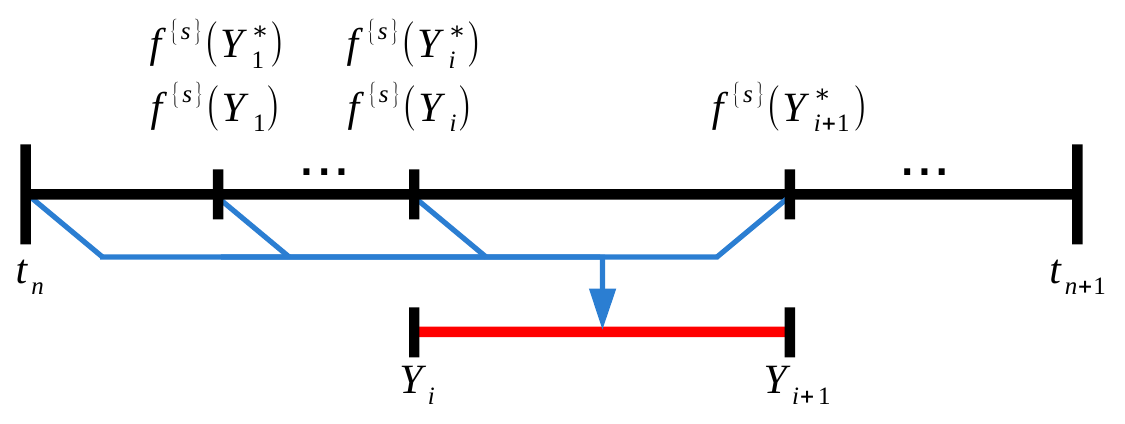}
		\caption{The newly proposed \IPCMethod{} schemes use previously computed stages and a predicted next stage, which allows for interpolation of slow tendencies in the formulation of modified fast systems.}
		\label{fig:MRI-GARK_dependencies:IPC}
	\end{subfigure}
	\caption{Comparison of MRI-GARK schemes: blue arrows indicate stage dependencies of the modified fast ODE and the red lines indicate the intervals over which a fast ODE is solved.}
	\label{fig:MRI-GARK_dependencies}
\end{figure}

The paper is organized as follows. The new family of step predictor corrector MRI-GARK schemes is introduced in \cref{sec:SC-MRI}, followed by its order condition theory and the stability analysis. In \cref{sec:IC-MR}, internal stage predictor corrector MRI-GARK schemes are defined and their order conditions and stability are established. Numerical results are reported in \cref{sec:numerics}, and concluding remarks are drawn in \cref{sec:conclusions}. \Cref{sec:new_methods} presents the lists of coefficients and stability plots for the newly developed methods.

\section{Step predictor-corrector MRI-GARK methods}
\label{sec:SC-MRI}

One coupling strategy commonly used in discrete multirate methods is a predictor-corrector approach, where the predictor evolves the entire system, while the corrector is only applied to the fast partition whose solution was ``predicted'' inaccurately (see \cite{Rice_1960,Savcenco_2007_stability,Savcenko_2007_MR-hyperbolic}).  First, a combined Runge--Kutta macro-step is taken which serves as the predictor.  The fast parts of the predicted stages are inaccurate and are refined by sub-stepping the fast component only. Approximations of the slow values needed during the micro-steps are obtained from interpolating the slow predicted values. The \SPCMethod{} methods, as depicted in \cref{fig:MRI-GARK_dependencies:SPC}, can be viewed as an extreme case of this coupling strategy where the multirate ratio is infinite, i.e., the corrector takes infinitely many steps to refine the fast solution.

\subsection{Method definition}

We start with a ``slow'' Runge--Kutta base method
\begin{equation}
	\label{eqn:slow_base_scheme}
	\begin{rktableau}{c|c}
		\cbase & \Abase \\
		\hline
		& \bbase* \\
		\hline
		& \bhatbase*
	\end{rktableau}
\end{equation}
with $s\S$ stages.  Unlike other multirate infinitesimal strategies, the base method is not restricted to be explicit or diagonally implicit.

\begin{definition}[Step predictor corrector MRI-GARK methods]
	One step of a \SPCMethod{} (\SPCabbv{}) scheme applied to \cref{eqn:multirate_additive_ode} is given by
	\begin{subequations}
		\label{eqn:SPC-MRI-GARK}
		\begin{align}
			\label{eqn:SPC-MRI-GARK_predictor}
			& Y_i = y_n + H \sum_{j=1}^{s\S} \, \abase_{i,j} f_j, \quad i =1,\dots,s\S, \\
			\label{eqn:SPC-MRI-GARK_corrector}
			& \left\{ \begin{aligned}
				v(0) &= y_n, \\
				v' &= f\F(t_n + \theta, v) + \sum_{j=1}^{s\S} \gamma_j \mleft( \tfrac{\theta}{H} \mright) \, f\S_j, \quad \text{for } \theta \in [0, H], \\
				y_{n+1} &= v(H),
			\end{aligned} \right.
		\end{align}
	\end{subequations}
	where $f_j \coloneqq f \big( t_n + \cbase_j \, H, Y_j \big)$ and $f_j\S \coloneqq f\S \big( t_n + \cbase_j \, H, Y_j \big)$.
\end{definition}

\begin{definition}[Slow tendency coefficients {\cite[Definition 2.2]{Sandu_2018_MRI-GARK}}]
	The time-dependent coefficients in \cref{eqn:SPC-MRI-GARK_corrector} are defined as polynomials:
	\begin{equation}
		\label{eqn:gamma_series}
		\gamma_i(t) \coloneqq \sum_{k \ge 0} \gamma_i^k \, t^k,
		\quad
		\~{\gamma}_i(t) \coloneqq \int_{0}^{t} \gamma_i(\tau) \, \dd \tau = \sum_{k \ge 0} \gamma_i^k \frac{t^{k+1}}{k+1},
		\quad
		\-{\gamma}_i \coloneqq \~{\gamma}_i(1).
	\end{equation}
\end{definition}

\begin{remark}[Embedded method]
	\label{rem:SPC-MRI-GARK_embedded}
	An embedded solution for an \SPCabbv{} method can be computed by solving the additional ODE
	\begin{equation*}
		\left\{ \begin{aligned}
			\^v(0) &= y_n, \\
			\^v' &= f\F \mleft( t_n + \theta, \^v \mright) + \sum_{j=1}^{s\S} \^\gamma_j \mleft( \tfrac{\theta}{H} \mright) \, f\S_j, \quad \text{for } \theta \in [0, H], \\
			\^y_{n+1} &= \^v(H),
		\end{aligned} \right.
	\end{equation*}
	which uses the embedded polynomials $\^\gamma_i$ and produces a solution of a different order.  We note that although this additional integration can be expensive, it can be done in parallel with \cref{eqn:SPC-MRI-GARK_corrector}.
\end{remark}

Consider the trivial partitioning $f\F = 0$, $f\S = f$ of \cref{eqn:multirate_additive_ode}.  In this case, it is natural to expect an \SPCabbv{} method to degenerate into the slow base method.  Note that the final solution of \cref{eqn:SPC-MRI-GARK_corrector} simplifies to
\begin{equation}
	\label{eqn:SPC-MRI-GARK_slow_only}
	y_{n+1}
	= y_n + \int_0^H \sum_{j=1}^{s\S} \gamma_j \mleft( \tfrac{\theta}{H} \mright) \, f\S_j \dd{\theta}
	= y_n + H \sum_{j=1}^{s\S} \-\gamma_j \, f_j\S.
\end{equation}
Thus, we enforce the condition
\begin{equation}
	\label{eqn:SPC-MRI-GARK_integral_condition}
	\bbase = \-\gamma.
\end{equation}

An important special case of \eqref{eqn:multirate_additive_ode} are component  partitioned systems:
\begin{equation}
	\label{eqn:ode_component}
	\begin{bmatrix} y\F \\ y\S \end{bmatrix}'
	= \begin{bmatrix} f\F \mleft( t, y\F, y\S \mright) \\ f\S \mleft( t, y\F, y\S \mright) \end{bmatrix}
	= \begin{bmatrix} f\F \mleft(t, y\F, y\S \mright) \\ 0 \end{bmatrix}
	+
	\begin{bmatrix} 0 \\ f\S \mleft( t, y\F, y\S \mright) \end{bmatrix}.
\end{equation}
One step of an \SPCabbv{} method \cref{eqn:SPC-MRI-GARK} applied to \cref{eqn:ode_component} reads:
\begin{subequations}
	\label{eqn:SPC-MRI-GARK_component}
	\methodstretch
	\begin{align}
		& \begin{bmatrix}
			Y\F_i \\ Y\S_i
		\end{bmatrix}
		= \begin{bmatrix}
			y\F_n + H \sum_{j=1}^{s\S} \abase_{i,j} \, f\F_j \\
			y\S_n + H \sum_{j=1}^{s\S} \abase_{i,j} \, f\S_j
		\end{bmatrix}, \\
		& \left\{ \begin{aligned}
			v\F(0) &= y\F_n, \\
			{v\F}' &= f\F \mleft(t_n + \theta, v, y\S_n + H \sum_{j=1}^{s\S} \~\gamma_j \mleft( \tfrac{\theta}{H} \mright) \, f_j\S \mright), \quad \text{for } \theta \in [0, H] \\
			\begin{bmatrix} y\F_{n+1} \\ y\S_{n+1} \end{bmatrix} &= \begin{bmatrix} v\F(H) \\ y\S_n + H \sum_{j=1}^{s\S} \bbase_j \, f\S_j \end{bmatrix},
		\end{aligned} \right. \label{eqn:SPC-MRI-GARK_component-ode}
	\end{align}
\end{subequations}
where $f\F_j \coloneqq f\F \big( t_n + \cbase_j \, H, Y\F_j, Y\S_j \big)$ and $f\S_j \coloneqq f\S \big( t_n + \cbase_j \, H, Y\F_j, Y\S_j \big)$.  With \cref{eqn:SPC-MRI-GARK_slow_only,eqn:SPC-MRI-GARK_integral_condition} the internal ODE integrates (and corrects) only the fast component, while the slow component is solved with the traditional base Runge--Kutta method \eqref{eqn:slow_base_scheme}.
\subsection{Order conditions}
\label{sec:SPC-MRI-GARK_order_conditions}

Following \cite{Sandu_2018_MRI-GARK}, we use an arbitrarily accurate Runge--Kutta method $\left( A\FF, b\F, c\F \right)$ to discretize the continuous ODE appearing in the method formulation, which casts the \SPCabbv{} scheme \eqref{eqn:SPC-MRI-GARK} into the GARK framework.  The discrete corrector stages, denoted $Y\FL[\cor]_i$, are computed as
\begin{align*}
	Y\FL[\cor]_i &= y_n + H \sum_{j=1}^{s\F} a\FF_{i,j} \left( f\FL[\cor]_j + \sum_{\ell=1}^{s\S} \gamma_\ell \big( c\F_j \big) \, f\S_\ell \right), \\
\ifreport
	&= y_n + H \sum_{j=1}^{s\F} a\FF_{i,j} \, f\FL[\cor]_j + H \sum_{\ell=1}^{s\S} \left( \sum_{j=1}^{s\F} a\FF_{i,j} \, \gamma_\ell \big( c\F_j \big) \right) f\S_\ell, \\
\fi
	&= y_n + H \sum_{j=1}^{s\F} a\FF_{i,j} \, f\FL[\cor]_j + H \sum_{j=1}^{s\S} \left( \sum_{k \geq 0} A\FF \, c\F[\times k] \, {\gamma^k}^T \right)_{i,j} f\S_j,
\end{align*}
where $f\FL[\cor]_j \coloneqq f\F \big( t_n + c\F_j \, H, Y\FL[\cor]_j \big)$ and the superscript $\times k$ denotes the elementwise vector power.  Similarly, the final solution reads
\begin{align*}
	y_{n+1}
	&= y_n + H \sum_{j=1}^{s\F} b\F_j \, f\FL[\cor]_j + H \sum_{j=1}^{s\S} \left( \sum_{k \geq 0} b\F* \, c\F[\times k] \right) f\S_j, \\
\ifreport
	&= y_n + H \sum_{j=1}^{s\F} b\F_j \, f\FL[\cor]_j + H \sum_{j=1}^{s\S} \left( \sum_{k \geq 0} \frac{1}{k+1} \gamma_j^k \right) f\S_j, \\
	&= y_n + H \sum_{j=1}^{s\F} b\F_j f\FL[\cor]_j + H \sum_{j=1}^{s\S} \-\gamma_j \, f\S_j, \\
\fi
	&= y_n + H \sum_{j=1}^{s\F} b\F_j \, f\FL[\cor]_j + H \sum_{j=1}^{s\S} \bbase_j f\S_j.
\end{align*}
Now, the corresponding GARK tableau for an \SPCabbv{} method is
\begin{equation*}
	\begin{rktableau}{c|cc|c|c}
		\cbase & \Abase & 0 & \Abase & \cbase \\
		c\F & 0 & A\FF & \sum_{k \geq 0} A\FF \, c\F[\times k] \, {\gamma^k}^T & c\FS \\ \hline
		\cbase & \Abase & 0 & \Abase & \cbase \\ \hline
		& 0 & b\F* & \bbase*
	\end{rktableau},
\end{equation*}
with $c\FS = \sum_{k \geq 0} A\FF \, c\F[\times k] \, {\gamma^k}^T \, \one\S$.

\subsubsection{Internal consistency}

\begin{theorem}[Internal consistency conditions]
	An \SPCabbv{} method \cref{eqn:SPC-MRI-GARK} satisfies the ``internal consistency'' conditions
	\begin{equation*}
		\*c\SF = \*c\SS \equiv \cbase
		\quad \text{and} \quad
		\*c\FF = \*c\FS
	\end{equation*}
	for any fast method iff the following conditions hold:
	\begin{equation}
		\label{eqn:SPC-MRI-GARK_ic}
		{\gamma^0}^T \, \one\S = 1
		\quad \text{and} \quad
		{\gamma^k}^T \, \one\S = 0 \quad \forall k \geq 1.
	\end{equation}
\end{theorem}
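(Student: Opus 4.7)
The strategy is to translate each of the four internal consistency requirements $\*c^{\{m,n\}} = A^{\{m,n\}} \one^{\{n\}}$ into an explicit row-sum equation using the coupling blocks of the GARK tableau from \cref{sec:SPC-MRI-GARK_order_conditions}, and then isolate the single block whose row sum genuinely constrains $\gamma$.

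Three of the four conditions will be immediate. The slow-slow block is $\Abase$, with row sum $\cbase$ by consistency of the base Runge--Kutta method \eqref{eqn:slow_base_scheme}. The slow-fast block is also $\Abase$, since the slow predictor stages \eqref{eqn:SPC-MRI-GARK_predictor} apply the coefficients $\abase_{i,j}$ to the full $f_j = f\F_j + f\S_j$, so its row sum is again $\cbase$. The fast-fast block is the auxiliary discretization matrix $A\FF$, whose row sum equals $c\F$ by consistency of any fast Runge--Kutta method. Hence $\*c\SS = \*c\SF = \cbase$ and $\*c\FF = c\F$ require no condition on $\gamma$.

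The remaining requirement $\*c\FS = \*c\FF = c\F$ becomes $A\FS \one\S = c\F$, with $A\FS = \sum_{k \ge 0} A\FF \, c\F[\times k] \, ({\gamma^k})^T$. Computing the row sum, observing that $c\F[\times 0] = \one\F$, and invoking $A\FF \one\F = c\F$ for the $k=0$ term rearranges the equation into
\begin{equation*}
    \bigl( ({\gamma^0})^T \one\S - 1 \bigr) \, c\F \;+\; \sum_{k \ge 1} \bigl( ({\gamma^k})^T \one\S \bigr) \cdot A\FF \, c\F[\times k] \;=\; 0.
\end{equation*}
Sufficiency of \eqref{eqn:SPC-MRI-GARK_ic} is then immediate: every scalar factor vanishes, so the identity holds for every choice of $(A\FF, c\F)$. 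For necessity, since the $\gamma^k$ are intrinsic to the \SPCabbv{} scheme while the fast method is free, I plan to exhibit a fast Runge--Kutta discretization with generic abscissae $c\F$ and enough stages that the vectors $c\F, A\FF c\F[\times 1], A\FF c\F[\times 2], \ldots$ are linearly independent up to the maximum degree of the polynomials $\gamma_i$; each coefficient $({\gamma^k})^T \one\S - \delta_{k,0}$ is then forced to vanish. I expect this linear-independence argument to be the main technical step, the rest being a direct unwinding of the GARK tableau and the base-method consistency relations.
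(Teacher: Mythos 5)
Your proposal is correct and takes essentially the same route as the paper's proof: both reduce internal consistency to the single nontrivial equation $c\F = \sum_{k \geq 0} A\FF \, c\F[\times k] \, {\gamma^k}^T \, \one\S$ (all other blocks have the required row sums automatically), observe that \cref{eqn:SPC-MRI-GARK_ic} is trivially sufficient, and prove necessity by choosing a fast method for which the vectors $A\FF \, c\F[\times k]$ are linearly independent and matching coefficients --- your $k=0$ rewriting via $A\FF \, \one\F = c\F$ is exactly the paper's ``matching powers'' step. No gaps.
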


\begin{proof}
	All internal consistency equations are automatically satisfied except for the following one, which needs to be imposed explicitly:
	\begin{align*}
		c\F &= \sum_{k \geq 0} A\FF \, c\F[\times k] \, {\gamma^k}^T \, \one\S.
	\end{align*}
	It is easy to confirm \cref{eqn:SPC-MRI-GARK_ic} is sufficient to satisfy this condition, and thus, internal consistency.  Since the equality must hold for all $A\FF$, it must hold when all $A\FF \, c\F[\times k]$ are linearly independent.  Matching powers of the left- and right-hand sides proves the necessity of \cref{eqn:SPC-MRI-GARK_ic}.
\end{proof}

If an \SPCabbv{} method has a slow base method \cref{eqn:slow_base_scheme} of order two, then internal consistency is sufficient to guarantee the method is order two \cite{Sandu_2015_GARK}.

\subsubsection{Fourth order conditions}

In this section, we derive order conditions of the \SPCabbv{} schemes for up to order four. First, we define a set of useful coefficients.
\begin{definition}[Some useful coefficients {\cite[Definition 3.3]{Sandu_2018_MRI-GARK}}]
\ifreport	
	Consider the ``bushy'' Butcher tree \cite{Hairer_book_I}
	\begin{equation}
		\mathfrak{t}_k \coloneqq [\underbrace{\tau,\dots,\tau}_{k \text{ times}}] \in T,
	\end{equation}
	where $\tau \in T$ is the tree of order one and $[\cdot]$ is the operation of joining subtrees by a root.  
\fi
	An arbitrarily accurate Runge--Kutta method $\left( A\FF, b\F, c\F \right)$ satisfies the following equations:
	\begin{equation}
		\label{eqn:zeta_omega_xi}
		\ifreport
			\begin{alignedat}{3}
				\zeta_k &\coloneqq \frac{1}{\gamma \mleft( [\mathfrak{t}_k] \mright)}
				&& = b\F* \, A\FF \, c\F[\times k] &&= \frac{1}{(k+1)(k+2)}, \\
				\omega_k &\coloneqq \frac{1}{\gamma \mleft( [\tau,\mathfrak{t}_k] \mright)}
				&& = \left( b\F \times c\F \right)^T A\FF \, c\F[\times k] &&= \frac{1}{(k+1)(k+3)}, \\
				\xi_k &\coloneqq \frac{1}{\gamma \mleft( [[\mathfrak{t}_k]] \mright)}
				&&= b\F* \, A\FF \, A\FF \, c\F[\times k] &&= \frac{1}{(k+1)(k+2)(k+3)}.
			\end{alignedat}
		\else
			\begin{alignedat}{3}
			\zeta_k &\coloneqq b\F* \, A\FF \, c\F[\times k] &&= \frac{1}{(k+1)(k+2)}, \\
			\omega_k &\coloneqq \left( b\F \times c\F \right)^T A\FF \, c\F[\times k] &&= \frac{1}{(k+1)(k+3)}, \\
			\xi_k &\coloneqq b\F* \, A\FF \, A\FF \, c\F[\times k] &&= \frac{1}{(k+1)(k+2)(k+3)}.
			\end{alignedat}
		\fi
	\end{equation}
\end{definition}

\begin{theorem}[Fourth order coupling conditions]
	\label{thm:order_conditions}
	An internally consistent \SPCabbv{} method \cref{eqn:SPC-MRI-GARK} satisfying \cref{eqn:SPC-MRI-GARK_integral_condition} has order four iff the slow base scheme \cref{eqn:slow_base_scheme} has order at least four, and the following coupling conditions hold:
	\begin{subequations}
		\label{eqn:SPC-MRI-GARK_oc}
		\begin{align}
			\label{eqn:SPC-MRI-GARK_oc:3a}
			\frac{1}{6} &= \sum_{k \geq 0} \zeta_k \, {\gamma^k}^T \, \cbase, & \text{(order 3)} \\
			\label{eqn:SPC-MRI-GARK_oc:4a}
			\frac{1}{8} &= \sum_{k \geq 0} \omega_k \, {\gamma^k}^T \, \cbase, & \text{(order 4)} \\
			\label{eqn:SPC-MRI-GARK_oc:4b}
			\frac{1}{12} &= \sum_{k \geq 0} \zeta_k \, {\gamma^k}^T \, \cbase[\times 2], & \text{(order 4)} \\
			\label{eqn:SPC-MRI-GARK_oc:4d}
			\frac{1}{24} &= \sum_{k \geq 0} \zeta_k \, {\gamma^k}^T \, \Abase \, \cbase. & \text{(order 4)}
		\end{align}
	\end{subequations}
	%
\end{theorem}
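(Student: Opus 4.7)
The plan is to invoke the GARK order condition theory applied to the tableau displayed immediately above the theorem, following the same reduction scheme carried out for decoupled MRI-GARK methods in \cite[Theorem 3.4]{Sandu_2018_MRI-GARK}. Internal consistency together with $\bbase = \-\gamma$ will collapse the full NB-series list up to order four into three blocks: all-slow trees, all-fast trees, and mixed (coupling) trees. The first two blocks are discharged by the slow base scheme \cref{eqn:slow_base_scheme} and by the arbitrary accuracy of $(A\FF,b\F,c\F)$ respectively, leaving the mixed trees to produce exactly the four stated coupling conditions.

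First I would enumerate every bi-colored rooted tree of order $\le 4$ and write out its GARK elementary weight using the tableau. The slow block of the tableau coincides with the base scheme $(\Abase,\bbase,\cbase)$, and $\bbase$ appears as the slow half of the output weights via \cref{eqn:SPC-MRI-GARK_integral_condition}, so every monochrome slow tree contributes exactly one classical order condition for \cref{eqn:slow_base_scheme}; these are satisfied iff the base scheme has order at least four. Every monochrome fast tree contributes a standard Runge--Kutta order condition in $(A\FF,b\F,c\F)$, which is automatic by the arbitrary accuracy of the fast discretization.

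The core of the argument is the mixed trees. Using \cref{eqn:SPC-MRI-GARK_ic} to collapse trees that differ only in the coloring of a slow subtree attached beneath a slow root, each surviving mixed condition takes the form $b\F* \, \Psi(A\FF, c\F) \, ({\gamma^k})^T \, v$, where $v$ is a slow-vector assembled from $\cbase$, $\cbase[\times 2]$, or $\Abase\,\cbase$. The identities \cref{eqn:zeta_omega_xi} then replace $b\F* \, A\FF \, c\F[\times k]$ by $\zeta_k$ and $(b\F \times c\F)^T A\FF \, c\F[\times k]$ by $\omega_k$, producing exactly \cref{eqn:SPC-MRI-GARK_oc:3a,eqn:SPC-MRI-GARK_oc:4a,eqn:SPC-MRI-GARK_oc:4b,eqn:SPC-MRI-GARK_oc:4d}. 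For necessity I would reuse the argument from the internal consistency proof: since the equations must hold for every arbitrarily accurate fast method, one can choose $(A\FF,b\F,c\F)$ so that $\{A\FF\,c\F[\times k]\}_{k\ge 0}$ are linearly independent, forcing the identities termwise in the series coefficients $\gamma^k$.

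The anticipated main obstacle is the bookkeeping of mixed trees at order four and the verification that no additional independent condition survives. In particular, trees of the shape $[[\mathfrak{t}_k]]$ with slow leaves and fast internal nodes a priori produce a $\xi_k$-weighted equation; I expect these to reduce, via the identity $\xi_k = \zeta_k/(k+3)$ embedded in \cref{eqn:zeta_omega_xi} combined with the base-scheme order-four requirement, to a consequence of \cref{eqn:SPC-MRI-GARK_oc:4d} rather than to a fifth independent equation. Confirming this collapse is the delicate step; the remaining reductions are mechanical applications of \cref{eqn:zeta_omega_xi} and \cref{eqn:SPC-MRI-GARK_ic}.
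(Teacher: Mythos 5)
Your overall route coincides with the paper's: cast the scheme into the GARK framework via the tableau above the theorem, invoke the result that an internally consistent GARK scheme has order four iff the base methods do and the twelve coupling conditions through order four hold, note that every slow-rooted coupling condition collapses onto a classical condition for the base scheme (since $\*b\S = \bbase$ and $\*A\SF\,\*c\F[\times \ell] = \*A\SS\,\*c\S[\times \ell] = \Abase\,\cbase[\times \ell]$), and evaluate the fast-rooted ones with \cref{eqn:zeta_omega_xi}. That enumeration produces exactly \cref{eqn:SPC-MRI-GARK_oc:3a,eqn:SPC-MRI-GARK_oc:4a,eqn:SPC-MRI-GARK_oc:4b,eqn:SPC-MRI-GARK_oc:4d}, plus a duplicate of \cref{eqn:SPC-MRI-GARK_oc:4d} from $\*b\F*\,\*A\FS\,\*A\SS\,\*c\S$, plus the $\xi_k$-weighted condition $\tfrac{1}{24}=\sum_{k\ge 0}\xi_k\,{\gamma^k}^T\cbase$ from $\*b\F*\,\*A\FF\,\*A\FS\,\*c\S$.

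The one step you flag as delicate is, however, handled incorrectly. The identity $\xi_k=\zeta_k/(k+3)$ cannot discharge the $\xi_k$-weighted condition: the factor $1/(k+3)$ depends on $k$ and cannot be pulled out of the sum $\sum_{k}\xi_k\,{\gamma^k}^T\cbase$, so that identity yields no linear relation among the displayed conditions; and the base scheme's order is irrelevant here, since this condition is fast-rooted and involves only $\gamma$ and $\cbase$. The correct observation is the linear relation $\xi_k=\zeta_k-\omega_k$, i.e.\ $\tfrac{1}{(k+1)(k+2)}-\tfrac{1}{(k+1)(k+3)}=\tfrac{1}{(k+1)(k+2)(k+3)}$, which makes the $\xi_k$-condition the difference of \cref{eqn:SPC-MRI-GARK_oc:3a} and \cref{eqn:SPC-MRI-GARK_oc:4a}: $\tfrac16-\tfrac18=\tfrac{1}{24}$. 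It is redundant given those two conditions, not a consequence of \cref{eqn:SPC-MRI-GARK_oc:4d}. A secondary issue: your necessity argument via linear independence of the vectors $A\FF\,c\F[\times k]$ would force the conditions termwise in $k$, which is strictly stronger than the stated weighted-sum conditions and would break the ``iff''. No such argument is needed here: for any arbitrarily accurate fast method the weights $\zeta_k,\omega_k,\xi_k$ take the fixed values in \cref{eqn:zeta_omega_xi}, so each GARK coupling condition is already a single scalar equation in $\gamma$ and the slow coefficients. (The linear-independence device belongs to the internal consistency proof, where the condition is a vector identity that must hold for every $A\FF$.)
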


\begin{proof}
	An internally consistent GARK scheme is order four iff the base methods are order four and the 12 coupling conditions up to order four are satisfied \cite{Sandu_2015_GARK}. We proceed with checking each coupling condition.
	\paragraph{Condition 3a} The first third order condition gives \cref{eqn:SPC-MRI-GARK_oc:3a}:
	\begin{equation*}
		\frac{1}{6}
		= \*b\F* \, \*A\FS \, \*c\S
		= \sum_{k \geq 0} \zeta_k \, {\gamma^k}^T \, \cbase.
	\end{equation*}
	\paragraph{Condition 3b} The other third order condition is automatically satisfied if the slow base method is order three:
	\begin{equation*}
		\frac{1}{6}
		= \*b\S* \, \*A\SF \, \*c\F
		= \bbase* \, \Abase \, \cbase.
	\end{equation*}
	\paragraph{Condition 4a} The first fourth order condition gives \cref{eqn:SPC-MRI-GARK_oc:4a}:
	\begin{equation*}
		\frac{1}{8}
		= \left( \*b\F \times \*c\F \right)^T \*A\FS \, \*c\S
		= \sum_{k \geq 0} \omega_k \, {\gamma^k}^T \, \cbase.
	\end{equation*}
	\paragraph{Condition 4b} This condition is automatically satisfied if the slow base method is order four:
	\begin{equation*}
	\frac{1}{8}
		= \left( \*b\S \times \*c\S \right)^T \*A\SF \, \*c\F
		= \left( \bbase \times \cbase \right)^T \Abase \, \cbase.
	\end{equation*}
	\paragraph{Condition 4c} This condition proves \cref{eqn:SPC-MRI-GARK_oc:4b}:
	\begin{equation*}
		\frac{1}{12}
		= \*b\F* \, \*A\FS \, \*c\S[\times 2]
		= \sum_{k \geq 0} \zeta_k \, {\gamma^k}^T \, \cbase[\times 2].
	\end{equation*}
	\paragraph{Condition 4d} This condition is automatically satisfied if the slow base method is order four:
	\begin{equation*}
		\frac{1}{12}
		= \*b\S* \, \*A\SF \, \*c\F[\times 2]
		= \bbase* \, \Abase \, \cbase[\times 2].
	\end{equation*}
	\paragraph{Condition 4e} This condition is the redundant since it is the difference of Condition 3a and 4a:
	\begin{equation*}
		\frac{1}{24} = \*b\F* \, \*A\FF \, \*A\FS \, \*c\S
		= \sum_{k \geq 0} \xi_k \, {\gamma^k}^T \, \cbase.
	\end{equation*}
	\paragraph{Condition 4f} This condition proves \cref{eqn:SPC-MRI-GARK_oc:4d}:
	\begin{equation*}
		\frac{1}{24}
		= \*b\F* \, \*A\FS \, \*A\SF \, \*c\F
		= \sum_{k \geq 0} \zeta_k \, {\gamma^k}^T \, \Abase \, \cbase.
	\end{equation*}
	\paragraph{Condition 4g} The following condition is identical to Condition 4f:
	\begin{equation*}
		\frac{1}{24}
		= \*b\F* \, \*A\FS \, \*A\SS \, \*c\S
		= \sum_{k \geq 0} \zeta_k \, {\gamma^k}^T \, \Abase \, \cbase.
	\end{equation*}
	\paragraph{Condition 4h} This condition is automatically satisfied if the slow base method is order four:
	\begin{equation*}
		\frac{1}{24}
		= \*b\S* \, \*A\SS \, \*A\SF \, \*c\F
		= \bbase* \, \Abase \, \Abase \, \cbase.
	\end{equation*}
	\paragraph{Condition 4i} This condition is automatically satisfied if the slow base method is order four:
	\begin{equation*}
		\frac{1}{24}
		= \*b\S* \, \*A\SF \, \*A\FS \, \*c\S
		= \bbase* \, \Abase \, \Abase \, \cbase.
	\end{equation*}
	\paragraph{Condition 4j} This condition is automatically satisfied if the slow base method is order four:
	\begin{equation*}
		\frac{1}{24}
		= \*b\S* \, \*A\SF \, \*A\FF \, \*c\F
		= \bbase* \, \Abase \, \Abase \, \cbase.
	\end{equation*}
\end{proof}

\begin{remark}
	In the proof of \cref{thm:order_conditions}, all coupling order conditions that start with $\*b\S*$ collapse onto the order conditions of the slow base method.  In the context of two-trees \cite{SanzSerna_1997_symplectic,Sandu_2015_GARK}, trees containing both slow and fast nodes with a slow root can be recolored into purely slow trees.  The purely fast trees are of no concern since the fast base method is arbitrarily accurate.  The remaining trees contain slow and fast nodes with a fast root, which correspond to coupling conditions \cref{eqn:SPC-MRI-GARK_oc} that must be explicitly enforced through the choice of $\gamma$.
\end{remark}
\subsection{Stability analysis}

	%

\subsubsection{Scalar stability analysis}
%
Consider the partitioned, linear, scalar test problem
\begin{equation}
	\label{eqn:scalar_test_problem}
	y' =  \lambda\F \, y + \lambda\S \, y, \qquad \lambda\F, \lambda\S \in \mathbb{C}^-,
\end{equation}
and let $z\F \coloneqq H \, \lambda\F$, $z\S \coloneqq H \, \lambda\S$, and $z \coloneqq z\F + z\S$.  
Applying the \SPCabbv{} method \eqref{eqn:SPC-MRI-GARK} to  \cref{eqn:scalar_test_problem} 
\ifreport
yields
\begin{align*}
	& Y = \left(\eye{s\S} - z \, \Abase \right)^{-1} \one\S y_n,\\
	& \left\{ \begin{aligned}
		v(0) &= y_n, \\
		v' &= \lambda\F \, v + \lambda\S \sum_{j=1}^{s\S} \gamma_j \mleft( \tfrac{\theta}{H} \mright) \, Y_j, \\
		&= \lambda\F \, v + \lambda\S \sum_{k \geq 0} \left( \tfrac{\theta}{H} \right)^k {\gamma^k}^T \, Y, \\
		y_{n+1} &= \varphi_0 \big( z\F \big) \, y_n + z\S \, \mu \big( z\F \big)^T \, Y,
	\end{aligned} \right.
\end{align*}
which 
\fi
leads to the stability function
\begin{equation}
	\label{eqn:SPC-MRI-GARK_scalar_stability}
	\begin{split}
		y_{n+1} &= R \big( z\F, z\S \big) \, y_n, \\
		R\big( z\F, z\S \big) &\coloneqq \varphi_0 \big( z\F \big) + z\S \, \mu \big( z\F \big)^T \left( \eye{s\S} - z \, \Abase \right)^{-1} \one\S,
	\end{split}
\end{equation}
where, following \cite{Sandu_2018_MRI-GARK}:
\begin{equation*}
\begin{split}
	\mu\big( z\F \big) &\coloneqq \sum_{k \geq 0} \gamma^k \, \varphi_{k+1}\big( z\F \big), \\
	\varphi_0(z) &\coloneqq e^z, \qquad
	\varphi_{k+1}(z) \coloneqq \int_0^1 e^{z (1-t)} t^k \dd{t}
	= \begin{cases}
		\frac{e^z - 1}{z} & k = 0 \\
		\frac{k \varphi_{k}(z) - 1}{z} & k > 0
	\end{cases}.
\end{split}	
\end{equation*}
Of special interest are cases when a partition becomes infinitely stiff.  If the base method has bounded internal stability, the stability function \cref{eqn:SPC-MRI-GARK_scalar_stability} enjoys the following property:
\begin{subequations}
	\begin{equation}
		\label{eqn:SPC-MRI-GARK_zf_lim}
		\lim_{z\F \to - \infty} R\big( z\F, z\S \big) = 0.
	\end{equation}
	Provided $\Abase$ is invertible, e.g. the base method is SDIRK,
	\begin{equation}
		\label{eqn:SPC-MRI-GARK_zs_lim}
		\lim_{z\S \to - \infty} R\big( z\F, z\S \big) = \varphi_0 \big( z\F \big) - \mu \big( z\F \big)^T \left(\Abase\right)^{-1} \one\S.
	\end{equation}
\end{subequations}
Although \cref{eqn:SPC-MRI-GARK_zs_lim} cannot be zero for all $z\F$ due to the linear independence of $\varphi$ functions, its modulus is bounded for $z\F \in \mathbb{C}^-$.
%

\subsubsection{Matrix stability analysis}

Following \cite{Kvaerno_2000_stability-MRK,Sandu_2018_MRI-GARK}, consider the matrix test problem
\begin{equation}
	\label{eqn:matrix_test_problem}
	\begin{bmatrix} y\F \\ y\S \end{bmatrix}'
	= \begin{bmatrix}
		\lambda\F & \eta\S  \\
		\eta\F & \lambda\S
	\end{bmatrix}
	\begin{bmatrix} y\F \\ y\S \end{bmatrix}
	= \underbrace{
		\begin{bmatrix}
			\lambda\F & \frac{1-\xi}{\alpha} \left(\lambda\F-\lambda\S\right) \\
			-\alpha \, \xi \left(\lambda\F-\lambda\S\right) & \lambda\S
		\end{bmatrix}
	}_{\boldsymbol{\Omega}}
	\begin{bmatrix} y\F \\ y\S \end{bmatrix}.
\end{equation}
The change of variables that produces $\boldsymbol{\Omega}$ \cite{Sandu_2018_MRI-GARK}
\ifreport
\begin{equation*}
	\alpha \coloneqq \frac{\lambda\F-\lambda\S+\delta}{2 \, \eta\S}, \quad
	\xi  \coloneqq \frac{\lambda\F - \lambda\S - \delta }{2 \left(\lambda\F-\lambda\S\right)}, \quad
	\delta = \sqrt{4 \, \eta\F \, \eta\S+\left(\lambda\F-\lambda\S\right)^2},
\end{equation*}
\fi
allows the matrix eigenvalues to be written as linear combinations of the diagonal entries: $\xi \, \lambda\F + (1 -\xi) \, \lambda\S$ and $(1-\xi) \, \lambda\F + \xi \, \lambda\S$.  The coupling between the fast and slow variables is controlled by $\xi$.  Values close to zero indicate the slow system is weakly influenced by the fast one, while values close to one indicate the fast system is weakly influenced by the slow one.

Let
\begin{equation*}
	Z \coloneqq \begin{bmatrix}
		z\F & w\S \\ w\F & z\S
	\end{bmatrix} \coloneqq H \begin{bmatrix}
		\lambda\F & \eta\S  \\
		\eta\F & \lambda\S
	\end{bmatrix},
	\qquad
	\~\mu \big( z\F \big) \coloneqq \sum_{k \geq 0} \frac{\gamma^k}{k+1} \varphi_{k+2} \big( z\F \big).
\end{equation*}
The component partitioned \SPCabbv{} method \cref{eqn:SPC-MRI-GARK_component} applied to the matrix test problem \cref{eqn:matrix_test_problem} gives
\ifreport
\begin{align*}
	& \begin{bmatrix}
		Y\F \\ Y\S
	\end{bmatrix}
	= \left( \eye{2s\S} - Z \otimes \Abase \right)^{-1} \begin{bmatrix}
		y\F_n \, \one\S \\
		y\S_n \, \one\S
	\end{bmatrix}, \\
	& \left\{ \begin{aligned}
		v\F(0) &= y\F_n, \\
		{v\F}' &= \lambda\F \, v\F + \eta\S \, y\S_n + \eta\S \sum_{j=1}^{s\S} \~\gamma_j \mleft( \tfrac{\theta}{H} \mright) \left( w\F \, Y\F_j + z\S \, Y\S_j \right), \\
		&= \lambda\F \, v\F + \eta\S \, y\S + \eta\S \sum_{k \geq 0} \frac{(\theta/H)^{k+1}}{k+1} {\gamma^k}^T \left( w\F \, Y\F + z\S \, Y\S \right), \\
		\begin{bmatrix} y_{n+1}\F \\ y_{n+1}\S \end{bmatrix}
		&= \begin{bmatrix}
			v\F(H) \\
			y\S_n + w\F \, \bbase* \, Y\F + z\S \, \bbase* \, Y\S
		\end{bmatrix},
	\end{aligned} \right.
\end{align*}
We write the solution of the ODE as
\begin{equation*}
	y\F_{n+1} = \varphi_0 \big( z\F \big) \, y\F_n + w\S \, \varphi_1 \big( z\F \big) \, y\S_n + w\S \, \~\mu \big( z\F \big)^T \left( w\F \, Y\F + z\S \, Y\S \right).
\end{equation*}
The transfer matrix for the matrix test problem can be written as
\fi
\begin{equation}
	\begin{split}
		\begin{bmatrix} y\F_{n+1} \\ y\S_{n+1} \end{bmatrix}
		&= \*M \big( z\F, z\S, w\S, w\F \big) \begin{bmatrix} y\F_n \\ y\S_n \end{bmatrix}, \\
		\*M \big( z\F, z\S, w\S, w\F \big) &\coloneqq \begin{bmatrix}
			\varphi_0(z\F) & w\S \varphi_1 \mleft( z\F \mright) \\
			0 & 1
		\end{bmatrix} \\
		&\quad + \begin{bmatrix}
			w\S \, w\F \, \~\mu \mleft( z\F \mright)^T & w\S \, z\S \, \~\mu \mleft( z\F \mright)^T \\
			w\F \, \bbase* & z\S \, \bbase*
		\end{bmatrix} \mathfrak{Y} (Z),
	\end{split}
\end{equation}
where $\mathfrak{Y} (Z)$ is the internal stability matrix:
\begin{equation*}
	\mathfrak{Y} (Z) \coloneqq \left( \eye{2s\S} - Z \otimes \Abase \right)^{-1} \left( \eye{2} \otimes \one\S \right).
\end{equation*}
%

\subsection{Construction of practical \SPCabbv{} methods}

We develop new implicit \SPCabbv{} methods of up to order four. Their coefficients are presented in \cref{sec:SPC-MRI-GARK_methods}.  The base methods are chosen to be existing, high-quality schemes that have either singly diagonally implicit (SDIRK) or explicit first stage single diagonally implicit (ESDIRK) structures.  These offer a nice balance between stability and computational complexity. We note that explicit and fully implicit base methods can be employed as well.  The $\gamma(t)$ coupling coefficients for each method are determined by first enforcing the order conditions, and then using remaining free parameters to optimize for stability.  Plots of the scalar and matrix stability regions are provided in \cref{fig:SPC-MRI-GARK_scalar_stability,fig:SPC-MRI-GARK_matrix_stability}, respectively.  These regions are significantly larger than those of the decoupled MRI-GARK counterparts developed in \cite{Sandu_2018_MRI-GARK}.
\section{Internal stage predictor-corrector MRI-GARK methods}
\label{sec:IC-MR}

Traditional multirate infinitesimal methods subdivide the integration interval $[t_n, t_{n+1}]$ into subintervals $[t_n + \cbase_i \, H,t_n + \cbase_{i+1} \, H]$, and solve a fast ODE over each subinterval.  This advances the solution from one abscissa to the next, and then to the final solution.  As illustrated in \cref{fig:MRI-GARK_dependencies:IPC}, an \IPCMethod{} method follows this strategy, but also incorporates a predictor-corrector strategy similar to that used in \SPCabbv{} schemes.  On each subinterval, the solution is first predicted with a traditional Runge-Kutta stage calculation.  Next, the fast components are refined by solving an ODE which uses previous predictor and corrector stages, as well as the current predictor stage, to implement the slow tendencies.

\subsection{Method definition}

Again, we start with a slow Runge--Kutta base method \cref{eqn:slow_base_scheme}, but now enforce that is has a diagonally implicit structure and the abscissae are non-decreasing:
\begin{equation*}
	0 \leq \cbase_1 \leq \cbase_2 \leq \ldots \leq \cbase_{s\S} \leq 1.
\end{equation*}
This ensures that each ODE between stages is not integrated backward in time.  We define the abscissa increments:
\begin{equation*}
	\deltac = \left[
		\cbase_1, \cbase_2 - \cbase_1, \dots, \cbase_{s\S} - \cbase_{s\S - 1}
	\right]^T. 
\end{equation*}
The final integration from $\cbase_{s\S}$ to $1$ can introduce special cases that increase the complexity of the notation, order conditions, and stability analysis.  We will impose that the base slow method is stiffly accurate \cite{Hairer_book_II}, which makes the last stage equal to the final solution, and simplifies the subsequent analyses.  This comes at no loss of generality since we can always rewrite a Runge--Kutta method into a reducible, but stiffly accurate form.
\ifreport
In Butcher tableau notation we have:
\begin{equation*}
	\begin{rktableau}{c|c}
		\cbase & \Abase \\ \hline
		& \bbase*
	\end{rktableau}
	\quad \rightarrow \quad
	\begin{rktableau}{c|cc}
		\cbase & \Abase & 0\S \\
		1 & \bbase* & 0 \\ \hline
		& \bbase* & 0
	\end{rktableau}.
\end{equation*}
\fi
\begin{definition}[Internal stage predictor corrector MRI-GARK methods]
	One step of an \IPCMethod{} (\IPCabbv{}) scheme applied to \cref{eqn:multirate_additive_ode} is given by
	\begin{subequations}
		\label{eqn:IPC-MRI-GARK}
		\begin{align}
			\label{eqn:IPC-MRI-GARK_first_stage}
			& Y_0 \coloneqq y_n, \quad \cbase_0 \coloneqq 0, \\
			\label{eqn:IPC-MRI-GARK_internal_ode}
			& \left\{ \begin{aligned}
				Y^{\ast}_i &= y_n + H \sum_{j=1}^{i-1} \abase_{i,j} \, f_j + H \, \abase_{i,i} f^{\ast}_i, \\
				T_{i-1} &= t_n + \cbase_{i-1} \, H, \\
				v_i(0) &= Y_{i-1}, \\
				v_i' &= \deltac_{i} \, f\F \mleft(T_{i-1} + \deltac_{i} \, \theta, v_i \mright) + \sum_{j=1}^{i-1} \gamma_{i,j} \mleft( \tfrac{\theta}{H} \mright) \, f\S_j \\
				& \quad + \sum_{j=1}^{i} \psi_{i,j} \mleft( \tfrac{\theta}{H} \mright) \, f\S[\ast]_i, \quad \text{for } \theta \in [0, H], \\
				Y_{i} &= v_i(H), \qquad i = 1, \dots, s\S,
			\end{aligned} \right. \\
			\label{eqn:IPC-MRI-GARK_solution}
			& y_{n+1} = Y_{s\S},
		\end{align}
	\end{subequations}
	with $f\S[\ast]_j \coloneqq f\S \mleft( T_j, Y^{\ast}_j \mright)$ and $f^{\ast}_j \coloneqq f \mleft( T_j, Y^{\ast}_j \mright)$.  Stages and functions with an asterisk are predictor values, and terms without the asterisk are corrector values.  In order to enforce that only previously computed stages appear in the ODE, we require that $\gamma_{i,j}(\tau) = 0$ for $j \geq i$ and $\psi_{i,j}(\tau) = 0$ for $j>i$.
\end{definition}
Once again, we can take each $\gamma_{i,j}(t)$ and $\psi_{i,j}(t)$ to be polynomial in time.  These and their integral terms $\~\gamma_{i,j}(t)$, $\-\gamma_{i,j}$, $\~\psi_{i,j}(t)$, $\-\psi_{i,j}$ are defined analogously to \cref{eqn:gamma_series}.  The capitalized versions are used to denote the matrices of coefficients.

\begin{remark}[Embedded method]
	Following the strategy used in \cite{Sandu_2018_MRI-GARK}, an embedded solution can be obtained via the additional integration
	\begin{align*}
		\^v' &= \deltac_{s\S}  f\F \mleft(T_{s\S-1} + \deltac_{s\S} \, \theta, \^v \mright) + \sum_{j=1}^{s\S-1} \^\gamma_{j} \mleft( \tfrac{\theta}{H} \mright) \, f\S_j \\
		& \quad + \sum_{j=1}^{s\S} \^\psi_{j} \mleft( \tfrac{\theta}{H} \mright) \, f\S[\ast]_i, \quad \text{for } \theta \in [0, H], \\
		\^y_{n+1} &= \^v(H).
	\end{align*}
\end{remark}
With the trivial partitioning $f\S = f$, $f\F=0$, the corrector stages simplify to
\begin{equation}
	\label{eqn:IPC-MRI-GARK_ff0}
	\begin{split}
		Y_i &= Y_{i-1} + H \sum_{j=1}^{i-1} \-\gamma_{i,j} \, f\S_j + H \sum_{j=1}^{i} \-\psi_{i,j} \, f\S[\ast]_j \\
		&= y_n + H \sum_{j=1}^{i-1} \left( \sum_{\ell=j+1}^{i} \-{\gamma}_{\ell,j} \right) f\S_j + H \sum_{j=1}^{i} \left( \sum_{\ell=j}^{i} \-{\psi}_{\ell,j} \right) f\S[\ast]_j.
	\end{split}
\end{equation}
The \IPCabbv{} method becomes a $2 s\S$ stage Runge--Kutta method with $s\S$ predictor stages and $s\S$ corrector stages.  In the absence of the fast component, it is natural to expect the predictor and corrector stages to coincide and for the method to degenerate into the slow base scheme.  Matching coefficients of \cref{eqn:IPC-MRI-GARK_ff0} to those of the predictor stage of \cref{eqn:IPC-MRI-GARK} gives the self-consistency conditions
\begin{equation}
	\label{eqn:IPC-MRI-GARK_integral_condition}
	\Tbase = E \, \-\Gamma
	\quad \text{and} \quad
	\Dbase = E \, \-\Psi,
\end{equation}
where $\Tbase$ is the strictly lower triangular part of $\Abase$ and
\begin{align*}
	E &= \begin{bmatrix}
		1 & \dots & 0 \\
		\vdots & \ddots & \vdots \\
		1 & \dots & 1
	\end{bmatrix} \in \Re^{s\S \times s\S}, \qquad
	\Dbase = \diag{\left( \abase_{1,1}, \dots, \abase_{s\S,s\S} \right)}.
\end{align*}

\begin{remark}[Repeated abscissae]
	\label{rem:IPC-MRI-GARK_repeated_c}
	When $\cbase_i = \cbase_{i-1}$, the fast function disappears from the ODE in \cref{eqn:IPC-MRI-GARK_internal_ode} as it is scaled by zero.  The corrector stage simplifies to
	\begin{equation*}
	\begin{split}
\ifreport
		Y_i &= Y_{i-1} + \int_{0}^{H} \left( \sum_{j=1}^{i-1} \gamma_{i,j} \mleft( \tfrac{\theta}{H} \mright) \, f\S_j + \sum_{j=1}^{i} \psi_{i,j} \mleft( \tfrac{\theta}{H} \mright) \, f\S[\ast]_j \right) \dd{\theta} \\
\fi
		Y_i &= Y_{i-1} + H \sum_{j=1}^{i-1} \-\gamma_{i,j} \, f\S_j + H \sum_{j=1}^{i} \-\psi_{i,j} \, f\S[\ast]_j.
\end{split}
	\end{equation*}
	Clearly, an ODE solver is no longer needed to compute $Y_i$.  This can be viewed as modifying (the slow part of) the initial conditions for the next step's ODE.
\end{remark}


For component partitioned systems \cref{eqn:ode_component}, an \IPCabbv{} step reads:
\begin{subequations}
	\label{eqn:IPC-MRI-GARK_component}
	\methodstretch
	\begin{align}
		& Y\F_0 = y\F_n, \quad Y\S_0 = y\S_n, \quad \cbase_0 = 0, \\
		& \left\{ \begin{aligned}
			\begin{bmatrix} Y\F[\ast]_i \\ Y\S[\ast]_i \end{bmatrix}
			&= \begin{bmatrix}
				y\F_n + H \sum_{j=1}^{i-1} \abase_{i,j} \, f\F_{j} + H \abase_{i,i} \, f\F[\ast]_i \\
				y\S_n + H \sum_{j=1}^{i-1} \abase_{i,j} \, f\S_{j} + H \abase_{i,i} \, f\S[\ast]_i
			\end{bmatrix}, \\
			Y\S_i &= Y\S[\ast]_i \\
			v\F(0) &= Y\F_{i-1}, \\
			T_{i-1} &= t_n + \cbase_{i-1} \, H, \\
			{v_i\F}' &= \deltac_{i} f\F \mleft(T_{i-1} + \deltac_{i} \, \theta, v_i\F, Y\S_{i-1} + H \sum_{j=1}^{i} \~\delta_{i,j} \mleft( \tfrac{\theta}{H} \mright) \, f\S_j \mright), \\
			& \quad \text{for } \theta \in [0, H], \label{eqn:IPC-MRI-GARK_corrector} \\
			Y\F_i &= v_i\F(H), \quad i=1,\dots,s\S,
		\end{aligned} \right. \\
		%
		& \begin{bmatrix} y\F_{n+1} \\ y\S_{n+1} \end{bmatrix}
		= \begin{bmatrix} Y\F_{s\S} \\ Y\S_{s\S} \end{bmatrix},
	\end{align}
\end{subequations}
where $\~\delta_{i,j} \mleft( \tfrac{\theta}{H} \mright) = \~\gamma_{i,j} \mleft( \tfrac{\theta}{H} \mright) + \~\psi_{i,j} \mleft( \tfrac{\theta}{H} \mright)$ and $f\F[\ast]_j \coloneqq f\F \big( T_j, Y\F[\ast]_j, Y\S[\ast]_j \big)$.

\subsection{Order conditions}

Following \cref{sec:SPC-MRI-GARK_order_conditions}, we look to utilize GARK order condition theory to derive order conditions for \IPCabbv{} methods.  Again, we apply an arbitrarily accurate Runge--Kutta method $\left( A\FF, b\F, c\F \right)$ to discretize the ODEs and recover the GARK stages and GARK tableau.  We use the labels $\pre$ and $\cor$ to denote predictor and corrector stages, respectively.  Also we define $Y\FL[i]_k$ to be the $k$-th stage of the discretized ODE between abscissae $\cbase_{i - 1}$ and $\cbase_{i}$.  Now, the $i$-th step of \cref{eqn:IPC-MRI-GARK} is composed of the GARK stages
\begin{align*}
	Y\FL[\pre]_i &= Y\SL[\pre]_i
	= y_n + H \sum_{j=1}^{i-1} \abase_{i,j} \, f\FL[\cor]_j + H \, \abase_{i,i} \, f\FL[\pre]_i + H \sum_{j=1}^{i-1} \abase_{i,j} \, f\SL[\cor]_j \\
	& \quad + H \, \abase_{i,i} \, f\SL[\pre]_i, \\
	Y\FL[i]_k
	&= Y\FL[\cor]_{i-1} + H \sum_{j=1}^{s\F} a\FF_{k,j} \left( \deltac_i f\FL[i]_j + \sum_{\ell=1}^{i-1} \gamma_{i,\ell} \big( c\F_j \big) \, f\SL[\cor]_\ell \right. \\
	& \quad \left. + \sum_{\ell=1}^{i} \psi_{i,\ell} \big( c\F_j \big) \, f\SL[\pre]_{\ell} \right), \\
	Y\FL[\cor]_i &= Y\SL[\cor]_i
	= Y\FL[\cor]_{i-1} + H \sum_{j=1}^{s\F} b\F_{j} \left( \deltac_i f\FL[i]_j + \sum_{\ell=1}^{i-1} \gamma_{i,\ell} \big( c\F_j \big) \, f\SL_\ell \right. \\
	& \quad \left. + \sum_{\ell=1}^{i} \psi_{i,\ell} \big( c\F_j \big) \, f\SL[\pre]_{\ell} \right), \\
\end{align*}
with $f\FL[i]_j \coloneqq f\F \big( T_{i-1} + \deltac_i \, c\F_j, Y\FL[i]_j \big)$ and $f\comp{\sigma, \nu}_j \coloneqq f\comp{\sigma} \big( T_j, Y\comp{\sigma, \nu}_j \big)$ for $\sigma \in \{\fast, \slow\}$ and $\nu \in \{\pre, \cor\}$.  Now, we simplify $Y\FL[\cor]_i$ to obtain:
\begin{align*}
	Y\FL[\cor]_i &= Y\FL[\cor]_{i-1} + \deltac_i H \sum_{j=1}^{s\F} b\F_{j} \, f\FL[i]_j + H \sum_{j=1}^{s\F} b\F_{j} \sum_{\ell=1}^{i-1} \left( \sum_{k \geq 0} \gamma^k_{i,\ell} \, c\F[\times k]_{j} \right) f\SL[\cor]_\ell \\
	& \quad + H \sum_{j=1}^{s\F} b\F_{j} \sum_{\ell=1}^{i} \left( \sum_{k \geq 0} \psi^k_{i,\ell} \, c\F[\times k]_{j} \right) f\SL[\pre]_\ell \\
\ifreport
	&= Y\FL[\cor]_{i-1} + \deltac_i H \sum_{j=1}^{s\F} b\F_{j} \, f\FL[i]_j + H \sum_{\ell=1}^{i-1} \left( \sum_{k \geq 0} \frac{\gamma^k_{i,\ell}}{k+1} \right) f\SL[\cor]_\ell \\
	& \quad + H \sum_{\ell=1}^{i} \left( \sum_{k \geq 0} \frac{\psi^k_{i,\ell}}{k+1} \right) f\SL[\pre]_\ell \\
	&= Y\FL[\cor]_{i-1} + \deltac_i \, H \sum_{j=1}^{s\F} b\F_{j} f\FL[i]_j + H \sum_{j=1}^{i-1} \-\gamma_{i,j} \, f\SL[\cor]_j + H \sum_{j=1}^{i} \-\psi_{i,j} \, f\SL[\pre]_j \\
\fi
	&= y_n + H \sum_{j=1}^{i} \deltac_j \sum_{\ell=1}^{s\F} b\F_{\ell} \, f\FL[j]_{\ell} + H \sum_{j=1}^{i-1} \abase_{i,j} \, f\SL[\cor]_j + H \, \abase_{i,i} \, f\SL[\pre]_i.
\end{align*}
%
The stages of the discretized ODE simplify to

\begin{align*}
	Y\FL[i]_k &= y_n + H \sum_{j=1}^{i-1} \deltac_j \sum_{\ell=1}^{s\F} b\F_{\ell} \, f\FL[j]_{\ell} + \deltac_i H \sum_{j=1}^{s\F} a\FF_{k,j} \, f\FL[i]_j \\
	& \quad + H \sum_{j=1}^{i-2} \abase_{i,j} \, f\SL[\cor]_j + H \sum_{j=1}^{i-1} \left( \sum_{\ell=1}^{s\F} a\FF_{k,\ell} \, \gamma_{i,j} \big( c\F_j \big) \right) f\SL[\cor]_j \\
	& \quad + H \abase_{i-1,i-1} \, f\SL[\pre]_{i-1} + H \sum_{j=1}^{i} \left( \sum_{\ell=1}^{s\F} a\FF_{k,\ell} \, \psi_{i,j} \big( c\F_j \big) \right) f\SL[\pre]_j.
\end{align*}
The coefficients appearing in the stages can be organized into the following GARK tableau:
\begin{equation} \label{eqn:IPC-MRI-GARK_tableau}
	\begin{rktableau}{c|ccc|cc|c}
		\cbase & \Dbase & 0 & \Tbase & \Dbase & \Tbase & \cbase \\
		c\comp{\fast,\fast,i} & 0 & A\comp{\fast,\fast,i,i} & 0 & A\comp{\fast,\slow,i,\pre} & A\comp{\fast,\slow,i,\cor} & c\comp{\fast,\slow,i} \\
		\cbase & 0 & A\comp{\fast,\fast,\cor,i} & 0 & \Dbase & \Tbase & \cbase \\ \hline
		\cbase & \Dbase & 0 & \Tbase & \Dbase & \Tbase & \cbase \\
		\cbase & 0 & A\comp{\slow,\fast,\cor,i} & 0 & \Dbase & \Tbase & \cbase \\ \hline
		& 0 & \deltac* \otimes b\F* & 0 & e_{s\S}^T \Dbase & e_{s\S}^T \Tbase
	\end{rktableau}.
\end{equation}
The unspecified entries are
\begin{align*}
	A\comp{\fast,\fast,i,i} &= L \, \deltaC \otimes \one\F \, b\F* + \diag{\left( \deltac \right)} \otimes A\FF, \\
	A\comp{\fast,\slow,i,\pre} &= \sum_{k \geq 0} \Psi^k \otimes A\FF \, c\F[\times k] + L \, \Dbase \otimes \one\F, \\
	A\comp{\fast,\slow,i,\cor} &= \sum_{k \geq 0} \Gamma^k \otimes A\FF \, c\F[\times k] + L \, \Tbase \otimes \one\F, \\
	A\comp{\fast,\fast,\cor,i} &= A\comp{\slow,\fast,\cor,i} = \deltaC \otimes b\F*, \\
	c\comp{\fast,\fast,i} &= L \, \cbase \otimes \one\F + \deltac \otimes c\F, \\
	c\comp{\fast,\slow,i} &= L \, \cbase \otimes \one\F + \sum_{k \geq 0} \left( \Psi^k + \Gamma^k \right) \one\S \otimes A\FF \, c\F[\times k],
\end{align*}
with
\begin{align*}
	\deltaC &= \begin{bmatrix}
		\deltac_1 \\
		\deltac_1 & \deltac_2 \\
		\vdots & \vdots & \ddots \\
		\deltac_1 & \deltac_2 & \dots & \deltac_{s\S}
	\end{bmatrix},
\end{align*}
and $L \in \Re^{s\S \times s\S}$ is a lower shift matrix with entries $L_{i,j} = \delta_{i,j+1}$.

\subsubsection{Internal Consistency}

\begin{theorem}[Internal consistency conditions]
	An \IPCabbv{} method \cref{eqn:IPC-MRI-GARK} fulfills the ``internal consistency'' conditions
	\begin{equation*}
		\*c\SF = \*c\SS \equiv \cbase
		\quad \text{and} \quad
		\*c\FF = \*c\FS
	\end{equation*}
	for any fast method iff the following conditions hold:
	\begin{equation}
		\label{eqn:IPC-MRI-GARK_ic}
		\left( \Psi^0 + \Gamma^0 \right) \one\S = \deltac
		\quad \text{and} \quad
		\left( \Psi^k + \Gamma^k \right) \one\S = 0 \quad \forall k \geq 1.
	\end{equation}
\end{theorem}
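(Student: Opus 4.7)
The plan is to read off each $\*c\comp{\sigma,\nu}$ directly from the GARK tableau \cref{eqn:IPC-MRI-GARK_tableau} and identify which of the resulting equalities require a nontrivial condition on $(\Psi^k,\Gamma^k)$. The predictor rows of both partitions have the structure $[\Dbase, 0, \Tbase \mid \Dbase, \Tbase]$ with zeros in the internal-ODE columns, so their fast-column and slow-column row sums both collapse to $(\Dbase+\Tbase)\,\one\S = \Abase\,\one\S = \cbase$. The corrector rows of both partitions place the block $A\comp{\fast,\fast,\cor,i} = A\comp{\slow,\fast,\cor,i} = \deltaC \otimes b\F*$ in the internal-ODE columns, whose row sum is $(\deltaC\,\one\S) \otimes (b\F*\,\one\F) = \cbase$, matching the slow-column row sum $\cbase$. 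Thus the conditions $\*c\SF = \*c\SS = \cbase$, as well as the predictor- and corrector-row instances of $\*c\FF = \*c\FS$, all hold automatically.

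The only row block for which $\*c\FF = \*c\FS$ is nontrivial is the middle (internal-ODE) block, for which
\begin{equation*}
    c\comp{\fast,\fast,i} = L\,\cbase \otimes \one\F + \deltac \otimes c\F,
    \qquad
    c\comp{\fast,\slow,i} = L\,\cbase \otimes \one\F + \sum_{k \geq 0}\bigl(\Psi^k + \Gamma^k\bigr)\,\one\S \otimes A\FF\,c\F[\times k].
\end{equation*}
Cancelling the common term and using the row-sum identity $c\F = A\FF\,\one\F = A\FF\,c\F[\times 0]$ of the arbitrarily accurate fast method rewrites the matching condition as
\begin{equation*}
    \deltac \otimes A\FF\,c\F[\times 0] = \sum_{k \geq 0} \bigl(\Psi^k + \Gamma^k\bigr)\,\one\S \otimes A\FF\,c\F[\times k].
\end{equation*}
Substituting the proposed conditions \cref{eqn:IPC-MRI-GARK_ic} collapses the right-hand side onto the left, which establishes sufficiency.

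For necessity I would mirror the argument used in the \SPCabbv{} internal consistency theorem: because the identity must hold for every arbitrarily accurate fast method, one can choose one for which the vectors $\{A\FF\,c\F[\times k]\}_{k \geq 0}$ are linearly independent up to any desired order. In the Kronecker products above, each $A\FF\,c\F[\times k]$ sits in the fast tensor factor while $(\Psi^k + \Gamma^k)\,\one\S$ sits in the slow factor, so fast-side independence forces the slow-side coefficients to match term by term, yielding \cref{eqn:IPC-MRI-GARK_ic}. The main delicate point---and the step I would verify with the most care---is precisely this decoupling of the Kronecker factors, since a priori the distinct slow vectors $(\Psi^k + \Gamma^k)\,\one\S$ could combine across $k$ if the corresponding fast factors were dependent; invoking the same freedom in choosing $A\FF$ and $c\F$ that underlies the \SPCabbv{} proof resolves this issue and closes the argument.
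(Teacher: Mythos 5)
Your proposal is correct and follows essentially the same route as the paper's proof: isolate the single nontrivial internal consistency equation $c\comp{\fast,\fast,i} = c\comp{\fast,\slow,i}$ from the GARK tableau (all other blocks matching automatically), confirm sufficiency of \cref{eqn:IPC-MRI-GARK_ic} by substitution, and obtain necessity by choosing a fast method for which the vectors $A\FF c\F[\times k]$ are linearly independent and matching terms. Your explicit verification of the automatically satisfied blocks and of the Kronecker-factor decoupling simply spells out details the paper leaves implicit.
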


\begin{proof}
	All internal consistency equations are automatically satisfied except
	\begin{align*}
		c\comp{\fast,\fast,i} &= c\comp{\fast,\slow,i} \qquad \Leftrightarrow \\
		L \cbase \otimes \one\F + \deltac \otimes c\F
		&= L \cbase \otimes \one\F + \sum_{k \geq 0} \left( \Psi^k + \Gamma^k \right) \one\S \otimes A\FF c\F[\times k].
	\end{align*}
It is easy to confirm \cref{eqn:IPC-MRI-GARK_ic} is sufficient to satisfy this condition, and thus, internal consistency.  Since the equality must hold for all $A\FF$, it must hold when all $A\FF c\F[\times k]$ are linearly independent.  Matching powers of the left- and right-hand sides proves the necessity of \cref{eqn:IPC-MRI-GARK_ic}.
\end{proof}

Like with \SPCabbv{} methods, internal consistency and a slow base method of order two guarantees an \IPCabbv{} method is order two \cite{Sandu_2015_GARK}.

\subsubsection{Fourth order conditions}
	
In this section, we derive order conditions of the \IPCabbv{} schemes for up to order four.

\ifreport
\begin{lemma}[Intermediate matrix products]
	The coefficients of the GARK tableau \cref{eqn:IPC-MRI-GARK_tableau} satisfy
	\begin{subequations}
		\methodstretch
		\begin{align}
			\label{eqn:IPC-MRI-GARK_Ass_Cs}
			\*A\SS \, \*c\S[\times \ell] &= \begin{bmatrix}
				\Abase \, \cbase[\times \ell] \\
				\Abase \, \cbase[\times \ell]
			\end{bmatrix}, \\
			\label{eqn:IPC-MRI-GARK_Asf_Cf}
			\*A\SF \, \*c\F[\times \ell] &= \begin{bmatrix}
				\Abase \, \cbase[\times \ell] \\
				\frac{1}{\ell + 1} \cbase[\times (\ell + 1)]
			\end{bmatrix}, \\
			\label{eqn:IPC-MRI-GARK_Afs_Cs}
			\*A\FS \, \*c\S[\times \ell] &= \begin{bmatrix}
				\Abase \, \cbase[\times \ell] \\
				L \, \Abase \, \cbase[\times \ell] \otimes \one\F + \sum_{k \geq 0} \left( \Psi^k + \Gamma^k \right) \cbase[\times \ell] \otimes A\FF \, c\F[\times k] \\
				\Abase \, \cbase[\times \ell]
			\end{bmatrix}.
		\end{align}
	\end{subequations}
\end{lemma}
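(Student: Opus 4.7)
My plan is to read off each of the slow-slow, slow-fast, and fast-slow blocks of the GARK tableau \cref{eqn:IPC-MRI-GARK_tableau}, apply it to the corresponding block abscissa vector, and simplify. Three structural facts do essentially all the work: the base-method splitting $\Abase = \Dbase + \Tbase$ into its diagonal and strict lower-triangular parts, the arbitrary accuracy of the auxiliary fast Runge--Kutta method $(A\FF, b\F, c\F)$, and a telescoping identity over the subintervals $[\cbase_{i-1}, \cbase_i]$.

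\textbf{Routine blocks.} I would first dispatch \cref{eqn:IPC-MRI-GARK_Ass_Cs}. Inspection of the tableau shows that both the predictor-slow and corrector-slow rows of $\*A\SS$ list $\Dbase$ in the predictor-slow column and $\Tbase$ in the corrector-slow column, while $\*c\S = [\cbase^T, \cbase^T]^T$. Distributing the matrix-vector product and using $\Abase = \Dbase + \Tbase$ immediately gives the claim. The same observation handles the outer (first and third) block-rows of \cref{eqn:IPC-MRI-GARK_Afs_Cs} and the upper block-row of \cref{eqn:IPC-MRI-GARK_Asf_Cf}, since the corresponding rows of $\*A\FS$ and $\*A\SF$ again pair $\Dbase$ with $\Tbase$ in their slow columns.

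\textbf{Main obstacle.} The nontrivial step is the corrector-slow block of \cref{eqn:IPC-MRI-GARK_Asf_Cf}, which requires evaluating $A\comp{\slow,\fast,\cor,i} \, c\comp{\fast,\fast,i}[\times \ell]$ where $A\comp{\slow,\fast,\cor,i} = \deltaC \otimes b\F*$ and $c\comp{\fast,\fast,i} = L \cbase \otimes \one\F + \deltac \otimes c\F$ has componentwise entries $\cbase_{j-1} + \deltac_j c\F_k$. After raising to the elementwise $\ell$-th power and unwinding the Kronecker structure together with the lower-triangular pattern of $\deltaC$, the $i$-th component reduces to $\sum_{j=1}^{i} \deltac_j \sum_{k} b\F_k (\cbase_{j-1} + \deltac_j c\F_k)^\ell$. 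Here I would invoke arbitrary accuracy of $(A\FF, b\F, c\F)$: the inner fast quadrature is exact, so it equals $\int_0^1 (\cbase_{j-1} + \deltac_j \tau)^\ell \, \dd{\tau} = (\cbase_j^{\ell+1} - \cbase_{j-1}^{\ell+1})/(\deltac_j (\ell+1))$ by a linear change of variable. The $\deltac_j$ cancel, and the resulting sum over $j = 1, \dots, i$ telescopes to $\cbase_i^{\ell+1}/(\ell+1)$, using $\cbase_0 = 0$, which is exactly the lower block of \cref{eqn:IPC-MRI-GARK_Asf_Cf}.

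\textbf{Internal-ODE row of \cref{eqn:IPC-MRI-GARK_Afs_Cs}.} Finally, I would sum the two slow-column blocks of the internal-ODE row: combining their defining expressions and using $\Dbase + \Tbase = \Abase$ once more yields $A\comp{\fast,\slow,i,\pre} + A\comp{\fast,\slow,i,\cor} = \sum_{k \geq 0}(\Psi^k + \Gamma^k) \otimes A\FF \, c\F[\times k] + L \Abase \otimes \one\F$. Applying this to $\cbase[\times \ell]$ via the Kronecker identity $(M \otimes b) \, x = (M x) \otimes b$ for a column vector $b$ produces the middle block of \cref{eqn:IPC-MRI-GARK_Afs_Cs} directly, with no further estimates needed.
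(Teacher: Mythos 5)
Your proposal is correct, and there is in fact nothing in the paper to compare it against: the paper states this lemma without proof (it serves only as a stepping stone in the fourth-order coupling conditions), so your direct block-by-block verification is exactly the argument the paper leaves implicit. All three computations check out: the $\Dbase + \Tbase = \Abase$ reductions for the routine blocks, the Kronecker identity $(M \otimes v)\, x = (M x) \otimes v$ for the internal-ODE row of \cref{eqn:IPC-MRI-GARK_Afs_Cs}, and the quadrature-exactness-plus-telescoping argument for the corrector row of \cref{eqn:IPC-MRI-GARK_Asf_Cf}, which is the only genuinely nontrivial block.

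Two small points, neither a gap. First, the slow abscissae are only required to be non-decreasing, so $\deltac_j = 0$ is allowed (cf.\ \cref{rem:IPC-MRI-GARK_repeated_c}); your intermediate formula with $\deltac_j$ in the denominator is then undefined. The identity you actually need, however, namely
\begin{equation*}
	\deltac_j \int_0^1 \bigl( \cbase_{j-1} + \deltac_j \, \tau \bigr)^{\ell} \dd{\tau}
	= \frac{ \bigl(\cbase_j\bigr)^{\ell+1} - \bigl(\cbase_{j-1}\bigr)^{\ell+1} }{\ell+1},
\end{equation*}
holds for $\deltac_j = 0$ as well (both sides vanish since $\cbase_j = \cbase_{j-1}$), so the telescoping sum is untouched; it would be cleaner to state the identity in this product form from the start. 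Second, a wording slip: in $\*A\SF$ the $\Dbase, \Tbase$ pair you invoke for the upper block sits in the \emph{fast} columns of the slow-predictor row (multiplying the fast predictor and corrector stages, whose abscissae are both $\cbase$), not in its slow columns; the conclusion $\Abase \, \cbase[\times \ell]$ is unaffected.
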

\fi

\begin{theorem}[Fourth order coupling conditions]
	An internally consistent \IPCabbv{} method \cref{eqn:IPC-MRI-GARK} satisfying \cref{eqn:IPC-MRI-GARK_integral_condition} has order four iff the slow base scheme has order at least four, and the following coupling conditions hold:
	\begin{subequations}
		\label{eqn:IPC-MRI-GARK_oc}
		\allowdisplaybreaks
		\begin{align}
			\label{eqn:IPC-MRI-GARK_oc:3a}
			\frac{1}{6} &= \deltac* \left( L \, \Abase + \sum_{k \geq 0} \zeta_k \left( \Psi^k + \Gamma^k \right) \right) \cbase, & \text{(order 3)} \\
			\label{eqn:IPC-MRI-GARK_oc:3b}
			\frac{1}{6} &= e_{s\S}^T \left( \Dbase \, \Abase \, \cbase + \frac{1}{2} \Tbase \, \cbase[\times 2] \right), & \text{(order 3)} \\
			\label{eqn:IPC-MRI-GARK_oc:4a}
			\begin{split}
				\frac{1}{8} &= \left( \deltac \times L \, \cbase \right)^T \left( L \, \Abase + \sum_{k \geq 0} \zeta_k \left( \Psi^k + \Gamma^k \right) \right) \cbase \\
				& \quad + \left( \deltac[\times 2] \right)^T \left( \frac{1}{2} L \, \Abase + \sum_{k \geq 0} \psi_k \left( \Psi^k + \Gamma^k \right) \right) \cbase,
			\end{split} & \text{(order 4)} \\
			\label{eqn:IPC-MRI-GARK_oc:4b}
			\begin{split}
				\frac{1}{8} &= \left( e_{s\S}^T \, \Dbase \times \cbase* \right) \Abase \, \cbase \\
				& \quad + \frac{1}{2} \left( e_{s\S}^T \, \Tbase \times \cbase* \right) \cbase[\times 2]
			\end{split} & \text{(order 4)} \\
			\label{eqn:IPC-MRI-GARK_oc:4c}
			\frac{1}{12} &= \deltac* \left( L \, \Abase + \sum_{k \geq 0} \zeta_k \left( \Psi^k + \Gamma^k \right) \right) \cbase[\times 2], & \text{(order 4)} \\
			\label{eqn:IPC-MRI-GARK_oc:4d}
			\frac{1}{12} &= e_{s\S}^T \left( \Dbase \, \Abase \, \cbase[\times 2] + \frac{1}{3} \Tbase \, \cbase[\times 3] \right), & \text{(order 4)} \\
			\label{eqn:IPC-MRI-GARK_oc:4e}
			\begin{split}
				\frac{1}{24} &= \deltac* \, L \, \deltaC \left( L \, \Abase + \sum_{k \geq 0} \zeta_k \left( \Psi^k + \Gamma^k \right) \right) \cbase \\
				& \quad + \left( \deltac[\times 2] \right)^T \left( \frac{1}{2} L \, \Abase + \sum_{k \geq 0} \xi_k \left( \Psi^k + \Gamma^k \right) \right) \cbase,
			\end{split} & \text{(order 4)} \\
			\label{eqn:IPC-MRI-GARK_oc:4f}
			\begin{split}
				\frac{1}{24} &= \deltac* \left( L \, \Dbase + \sum_{k \geq 0} \zeta_k \Psi^k \right) \Abase \, \cbase \\
				& \quad + \frac{1}{2} \deltac* \left( L \, \Tbase + \sum_{k \geq 0} \zeta_k \Gamma^k \right) \cbase[\times 2],
			\end{split} & \text{(order 4)} \\
			\label{eqn:IPC-MRI-GARK_oc:4g}
			\frac{1}{24} &= \deltac* \left( L \, \Abase + \sum_{k \geq 0} \zeta_k \left( \Psi^k + \Gamma^k \right) \right) \Abase \, \cbase, & \text{(order 4)} \\
			\label{eqn:IPC-MRI-GARK_oc:4h}
			\frac{1}{24} &= e_{s\S}^T \, \Abase \left( \Dbase \, \Abase \, \cbase + \frac{1}{2} \Tbase \, \cbase[\times 2] \right), & \text{(order 4)} \\
			\label{eqn:IPC-MRI-GARK_oc:4i}
			\begin{split}
				\frac{1}{24} &= e_{s\S}^T \, \Dbase \, \Abase \, \Abase \, \cbase \\
				& \quad + e_{s\S}^T \, \Tbase \, \deltaC \left( L \, \Abase + \sum_{k \geq 0} \zeta_k \left( \Psi^k + \Gamma^k \right) \right) \cbase.
			\end{split} & \text{(order 4)}
		\end{align}
	\end{subequations}
	%
\end{theorem}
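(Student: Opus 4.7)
The plan is to invoke the same GARK order-condition machinery used for \SPCabbv{} in \cref{thm:order_conditions}: an internally consistent two-partitioned GARK scheme is of order $p$ iff each base method is of order $p$ and the twelve coupling conditions up to order four hold \cite{Sandu_2015_GARK}. By choosing the fast base method $\left(A\FF, b\F, c\F\right)$ arbitrarily accurate, no fast-only tree imposes a restriction; and since the slow base \cref{eqn:slow_base_scheme} is required to be order four, every coupling tree with a slow root re-colors into a purely slow tree (as noted in the remark following \cref{thm:order_conditions}) and is therefore automatically satisfied. What remain are exactly the coupling trees with a fast root that carry at least one slow node, plus the self-consistency-driven conditions \cref{eqn:IPC-MRI-GARK_oc:3b}, \cref{eqn:IPC-MRI-GARK_oc:4b}, \cref{eqn:IPC-MRI-GARK_oc:4d}, \cref{eqn:IPC-MRI-GARK_oc:4h} which arise because a single macro step of the IPC tableau \cref{eqn:IPC-MRI-GARK_tableau} is itself a $2s\S$-stage Runge--Kutta method for the unpartitioned problem, and these must independently recover the base scheme's order-four conditions at the combined predictor/corrector level.

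My approach is as follows. First, I would read off the relevant GARK blocks from \cref{eqn:IPC-MRI-GARK_tableau} and pre-compute the building-block products $\*A\SS\,\*c\S[\times \ell]$, $\*A\SF\,\*c\F[\times \ell]$, $\*A\FS\,\*c\S[\times \ell]$ using the intermediate matrix-products lemma, together with the analogous $\*A\FF\,\*c\F[\times \ell]$ computed directly from the block layout. Next, I would substitute these into the twelve trees of order $\le 4$, contracting every fast-only factor of the form $b\F* A\FF\,c\F[\times k]$, $(b\F\times c\F)^TA\FF\,c\F[\times k]$, or $b\F* A\FF A\FF\,c\F[\times k]$ into the scalars $\zeta_k$, $\omega_k$, $\xi_k$ from \cref{eqn:zeta_omega_xi}. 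Finally, I would apply internal consistency \cref{eqn:IPC-MRI-GARK_ic} to collapse the sum $\left(\Psi^k+\Gamma^k\right)\one\S$ wherever a $\one\S$ terminates an $A\FS$ branch, and use the self-consistency identities \cref{eqn:IPC-MRI-GARK_integral_condition} (i.e.\ $\Tbase = E\,\-\Gamma$, $\Dbase = E\,\-\Psi$) to show that coupling trees with slow root reduce to slow-base-method order conditions and so vanish under the order-four hypothesis.

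Matching trees to equations should proceed as follows: the two third-order trees with $\*A\FS\,\*c\S$ and with $\*A\SF\,\*c\F$ recover \cref{eqn:IPC-MRI-GARK_oc:3a} and \cref{eqn:IPC-MRI-GARK_oc:3b}; the four ``bushy'' fourth-order trees recover \cref{eqn:IPC-MRI-GARK_oc:4a}--\cref{eqn:IPC-MRI-GARK_oc:4d}; and the six ``tall'' fourth-order trees with a nested inner product give \cref{eqn:IPC-MRI-GARK_oc:4e}--\cref{eqn:IPC-MRI-GARK_oc:4i}, after the slow-root variants are absorbed into the base-method conditions and after noticing (as in the \SPCabbv{} proof of Conditions 4e/4g) that duplicated trees reduce via internal consistency to ones already listed.

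The main obstacle is bookkeeping rather than depth: the $A\FS$ block contains three qualitatively different terms—$L\,\Abase$, $\sum_k \Gamma^k\otimes A\FF c\F[\times k]$, and $\sum_k \Psi^k\otimes A\FF c\F[\times k]$—and the predictor/corrector splitting means that in trees like \cref{eqn:IPC-MRI-GARK_oc:4f} the $\Psi^k$ branch (attached to predictor slow stages via $\Dbase$) and the $\Gamma^k$ branch (attached to corrector slow stages via $\Tbase$) must be kept separate, whereas most other conditions only see the internally-consistent sum $\Psi^k+\Gamma^k$. Careful propagation of the shift operator $L$ through nested products, and correct identification of which $\*c$-vector component (predictor or corrector) terminates each branch, is the delicate step; once the indexing is set up cleanly, each condition reduces algebraically to one entry of \cref{eqn:IPC-MRI-GARK_oc}.
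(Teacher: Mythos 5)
Your computational pipeline---the twelve GARK coupling conditions of \cite{Sandu_2015_GARK}, the intermediate matrix-products lemma, contraction of fast-only factors into $\zeta_k$, $\omega_k$, $\xi_k$, and internal consistency---is exactly the paper's, and the tree-to-equation matching in your final paragraph agrees with its proof. The genuine gap is your treatment of the slow-rooted trees, which is precisely where \IPCabbv{} departs from \SPCabbv{}. The recoloring remark after \cref{thm:order_conditions} is specific to the SPC tableau, where $\*A\SF \, \*c\F[\times \ell] = \Abase \, \cbase[\times \ell]$ because slow stages see fast stages only through predictor values; it is false for IPC. There, the slow (corrector) stages depend on the fast ODE stages through the fast integration weights $\deltaC \otimes b\F*$, so by \cref{eqn:IPC-MRI-GARK_Asf_Cf} the corrector rows of $\*A\SF \, \*c\F[\times \ell]$ equal $\frac{1}{\ell+1} \cbase[\times (\ell+1)]$, not $\Abase \, \cbase[\times \ell]$. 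Consequently the slow-rooted mixed trees do \emph{not} recolor into purely slow trees: all of \cref{eqn:IPC-MRI-GARK_oc:3b,eqn:IPC-MRI-GARK_oc:4b,eqn:IPC-MRI-GARK_oc:4d,eqn:IPC-MRI-GARK_oc:4h,eqn:IPC-MRI-GARK_oc:4i} are genuine coupling conditions. For instance, after invoking the DIRK structure $\Abase = \Dbase + \Tbase$, stiff accuracy, and the base method's fourth-order conditions, \cref{eqn:IPC-MRI-GARK_oc:4h} is equivalent to
\begin{equation*}
	\bbase* \, \Dbase \left( \Abase \, \cbase - \tfrac{1}{2} \cbase[\times 2] \right) = 0,
\end{equation*}
a nontrivial constraint on the base scheme (automatic only if it had stage order two), so it cannot be ``absorbed into the base-method conditions.''

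Your proposal is internally inconsistent on exactly this point: the first paragraph asserts that every slow-rooted tree is automatically satisfied, the second proposes to prove this from the self-consistency identities \cref{eqn:IPC-MRI-GARK_integral_condition}, yet the first and third paragraphs list \cref{eqn:IPC-MRI-GARK_oc:3b,eqn:IPC-MRI-GARK_oc:4b,eqn:IPC-MRI-GARK_oc:4d,eqn:IPC-MRI-GARK_oc:4h} (omitting \cref{eqn:IPC-MRI-GARK_oc:4i}) among the conditions your computation must produce. Carried out as described, the elimination step would fail: no manipulation with $\Tbase = E \, \-\Gamma$ and $\Dbase = E \, \-\Psi$ reduces $\*b\S* \, \*A\SS \, \*A\SF \, \*c\F$ or $\*b\S* \, \*A\SF \, \*A\FS \, \*c\S$ to slow-base-method order conditions, and the resulting theorem would be missing five of its eleven conditions. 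Two smaller corrections: these conditions are not ``self-consistency-driven'' conditions on an unpartitioned $2 s\S$-stage Runge--Kutta method, since they involve the fast partition through $\*A\SF$ and $\*c\F$; and the reduction of the twelfth tree $\*b\S* \, \*A\SF \, \*A\FF \, \*c\F$ to \cref{eqn:IPC-MRI-GARK_oc:4d} uses the infinite stage order of the fast method, $\*A\FF \, \*c\F = \tfrac{1}{2} \*c\F[\times 2]$, not internal consistency. If you run the block substitution honestly over all twelve trees, without the claimed shortcut, you land exactly on the paper's proof.
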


\begin{proof}
	An internally consistent GARK scheme is order four iff the base methods are order four and the 12 coupling conditions up to order four are satisfied \cite{Sandu_2015_GARK}. We proceed with checking each coupling condition.
	\paragraph{Condition 3a}
	\ifreport
	By using \cref{eqn:IPC-MRI-GARK_Afs_Cs}, the first third order condition gives \cref{eqn:IPC-MRI-GARK_oc:3a}:
	\else
	The first third order condition gives \cref{eqn:IPC-MRI-GARK_oc:3a}:
	\fi
	\begin{align*}
		\frac{1}{6}
		&= \*b\F* \, \*A\FS \, \*c\S \\
		&= \left( \deltac \otimes b\F \right)^T \left( L \, \Abase \, \cbase \otimes \one\F + \sum_{k \geq 0} \left( \Psi^k + \Gamma^k \right) \cbase \otimes A\FF \, c\F[\times k] \right) \\
		&= \deltac* \left( L \, \Abase + \sum_{k \geq 0} \zeta_k \left( \Psi^k + \Gamma^k \right) \right) \cbase.
	\end{align*}
	\paragraph{Condition 3b}
	\ifreport
	The other third order condition is expanded with \cref{eqn:IPC-MRI-GARK_Asf_Cf} to get \cref{eqn:IPC-MRI-GARK_oc:3b}:
	\else
	The other third order condition gives \cref{eqn:IPC-MRI-GARK_oc:3b}:
	\fi
	\begin{align*}
		\frac{1}{6}
		&= \*b\S* \, \*A\SF \, \*c\F
		= e_{s\S}^T \left( \Dbase \, \Abase \, \cbase + \frac{1}{2} \Tbase \, \cbase[\times 2] \right).
	\end{align*}
	\paragraph{Condition 4a}
	\ifreport
	By using \cref{eqn:IPC-MRI-GARK_Afs_Cs}, the first fourth order condition gives \cref{eqn:IPC-MRI-GARK_oc:4a}:
	\else
	The first fourth order condition gives \cref{eqn:IPC-MRI-GARK_oc:4a}:
	\fi
	\begin{align*}
		\frac{1}{8}
		&= \left( \*b\F \times \*c\F \right)^T \*A\FS \, \*c\S \\
		&= \left( \left( \deltac \otimes b\F \right) \times \left( L \, \cbase \otimes \one\F + \deltac \otimes c\F \right) \right)^T \\
		& \quad \left( L \, \Abase \, \cbase \otimes \one\F + \sum_{k \geq 0} \left( \Psi^k + \Gamma^k \right) \cbase \otimes A\FF c\F[\times k] \right) \\
		&= \left( \deltac \times L \, \cbase \right)^T \left( L \, \Abase + \sum_{k \geq 0} \zeta_k \left( \Psi^k + \Gamma^k \right) \right) \cbase \\
		& \quad + \left( \deltac[\times 2] \right)^T \left( \frac{1}{2} L \, \Abase + \sum_{k \geq 0} \psi_k \left( \Psi^k + \Gamma^k \right) \right) \cbase.
	\end{align*}
	\paragraph{Condition 4b}
	\ifreport
	We derive \cref{eqn:IPC-MRI-GARK_oc:4b} with \cref{eqn:IPC-MRI-GARK_Asf_Cf}:
	\else
	The following order condition gives \cref{eqn:IPC-MRI-GARK_oc:4b}:
	\fi
	\begin{align*}
		\frac{1}{8}
		&= \left( \*b\S \times \*c\S \right)^T \*A\SF \, \*c\F \\
		&= \left( e_{s\S}^T \, \Dbase \times \cbase* \right) \Abase \, \cbase + \frac{1}{2} \left( e_{s\S}^T \, \Tbase \times \cbase* \right) \cbase[\times 2].
	\end{align*}
	\paragraph{Condition 4c}
	\ifreport
	We derive \cref{eqn:IPC-MRI-GARK_oc:4c} with \cref{eqn:IPC-MRI-GARK_Afs_Cs}:
	\else
	The following order condition gives \cref{eqn:IPC-MRI-GARK_oc:4c}:
	\fi
	\begin{align*}
		\frac{1}{12}
		&= \*b\F* \, \*A\FS \, \*c\S[\times 2]
		= \deltac* \left( L \, \Abase + \sum_{k \geq 0} \zeta_k \left( \Psi^k + \Gamma^k \right) \right) \cbase[\times 2].
	\end{align*}
	\paragraph{Condition 4d}
	\ifreport
	We derive \cref{eqn:IPC-MRI-GARK_oc:4d} with \cref{eqn:IPC-MRI-GARK_Asf_Cf}:
	\else
	The following order condition gives \cref{eqn:IPC-MRI-GARK_oc:4d}:
	\fi
	\begin{align*}
		\frac{1}{12}
		&= \*b\S* \, \*A\SF \, \*c\F[\times 2]
		= e_{s\S}^T \left( \Dbase \, \Abase \, \cbase[\times 2] + \frac{1}{3} \Tbase \, \cbase[\times 3] \right).
	\end{align*}
	\paragraph{Condition 4e}
	\ifreport
	We derive \cref{eqn:IPC-MRI-GARK_oc:4e} with \cref{eqn:IPC-MRI-GARK_Afs_Cs}:
	\else
	The following order condition gives \cref{eqn:IPC-MRI-GARK_oc:4e}:
	\fi
	\begin{align*}
		\frac{1}{24}
		&= \*b\F* \, \*A\FF \, \*A\FS \, \*c\S \\
		&= \left( \deltac \otimes b\F \right)^T \left( L \, \deltaC \otimes \one\F \, b\F* + \diag{\left( \deltac \right)} \otimes A\FF \right) \\
		& \quad \left( L \, \Abase \, \cbase \otimes \one\F + \sum_{k \geq 0} \left( \Psi^k + \Gamma^k \right) \cbase \otimes A\FF \, c\F[\times k] \right) \\
		&= \deltac* \, L \, \deltaC \left( L \, \Abase + \sum_{k \geq 0} \zeta_k \left( \Psi^k + \Gamma^k \right) \right) \cbase \\
		& \quad + \left( \deltac[\times 2] \right)^T \left( \frac{1}{2} L \, \Abase + \sum_{k \geq 0} \xi_k \left( \Psi^k + \Gamma^k \right) \right) \cbase.
	\end{align*}
	\paragraph{Condition 4f}
	\ifreport
	We derive \cref{eqn:IPC-MRI-GARK_oc:4f} with \cref{eqn:IPC-MRI-GARK_Asf_Cf}:
	\else
	The following order condition gives \cref{eqn:IPC-MRI-GARK_oc:4f}:
	\fi
	\begin{align*}
		\frac{1}{24}
		&= \*b\F* \, \*A\FS \, \*A\SF \, \*c\F \\
		&= \frac{1}{2} \left( \deltac \otimes b\F \right)^T \left( L \, \Tbase \otimes \one\F + \sum_{k \geq 0} \Gamma^k \otimes A\FF \, c\F[\times k] \right) \cbase[\times 2] \\
		& \quad + \left( \deltac \otimes b\F \right)^T \left( L \, \Dbase \otimes \one\F + \sum_{k \geq 0} \Psi^k \otimes A\FF \, c\F[\times k] \right) \Abase \, \cbase \\
		&= \deltac* \left( L \, \Dbase + \sum_{k \geq 0} \zeta_k \Psi^k \right) \Abase \, \cbase \\
		& \quad + \frac{1}{2} \deltac* \left( L \, \Tbase + \sum_{k \geq 0} \zeta_k \Gamma^k \right) \cbase[\times 2].
	\end{align*}
	\paragraph{Condition 4g}
	\ifreport
	We derive \cref{eqn:IPC-MRI-GARK_oc:4g} with \cref{eqn:IPC-MRI-GARK_Ass_Cs}:
	\else
	The following order condition gives \cref{eqn:IPC-MRI-GARK_oc:4g}:
	\fi
	\begin{align*}
		\frac{1}{24}
		&= \*b\F* \, \*A\FS \, \*A\SS \, \*c\S 
		= \deltac* \left( L \, \Abase + \sum_{k \geq 0} \zeta_k \left( \Psi^k + \Gamma^k \right) \right) \Abase \, \cbase.
	\end{align*}
	\paragraph{Condition 4h}
	\ifreport
	We derive \cref{eqn:IPC-MRI-GARK_oc:4h} with \cref{eqn:IPC-MRI-GARK_Asf_Cf}:
	\else
	The following order condition gives \cref{eqn:IPC-MRI-GARK_oc:4h}:
	\fi
	\begin{align*}
		\frac{1}{24}
		&= \*b\S* \, \*A\SS \, \*A\SF \, \*c\F
		= e_{s\S}^T \, \Abase \left( \Dbase \, \Abase \, \cbase + \frac{1}{2} \Tbase \, \cbase[\times 2] \right).
	\end{align*}
	\paragraph{Condition 4i}
	\ifreport
	We derive \cref{eqn:IPC-MRI-GARK_oc:4i} with \cref{eqn:IPC-MRI-GARK_Afs_Cs}:
	\else
	The following order condition gives \cref{eqn:IPC-MRI-GARK_oc:4i}:
	\fi
	\begin{align*}
		\frac{1}{24} &= \*b\S* \*A\SF  \*A\FS \*c\S \\
		&= e_{s\S}^T \, \Dbase \, \Abase \, \Abase \, \cbase + e_{s\S}^T \, \Tbase \left( \deltaC \otimes b\F* \right) \\
		& \quad \left( L \, \Abase \, \cbase \otimes \one\F + \sum_{k \geq 0} \left( \Psi^k + \Gamma^k \right) \cbase \otimes A\FF \, c\F[\times k] \right) \\
		&= e_{s\S}^T \, \Dbase \, \Abase \, \Abase \, \cbase \\
		& \quad + e_{s\S}^T \, \Tbase \, \deltaC \left( L \, \Abase + \sum_{k \geq 0} \zeta_k \left( \Psi^k + \Gamma^k \right) \right) \cbase.
	\end{align*}
	\paragraph{Condition 4j} This condition is equivalent to condition 4d since the fast base method has an arbitrarily large stage order, and thus, $\*A\FF \*c\F = \frac{1}{2} \*c\F[\times 2]$:
	\begin{align*}
		\frac{1}{24} = \*b\S* \, \*A\SF \, \*A\FF \, \*c\F
		\quad \Leftrightarrow \quad
		\frac{1}{12} = \*b\S* \, \*A\SF \, \*c\F[\times 2].
	\end{align*}
\end{proof}

\subsection{Linear stability analysis}
\label{sec:stability}
%

\subsubsection{Scalar stability analysis}

We revisit the scalar linear test problem \cref{eqn:scalar_test_problem} now for \IPCabbv{} methods.
\ifreport
The predictor stages in vector form are given by
\begin{equation} \label{eqn:linear_predicted_stages}
	\begin{split}
		Y^{\ast}
		&= y_n \, \one\S + z \, \Tbase \, Y + z \, \Dbase \, Y^{\ast} \\
		&= \left( \eye{s\S} - z \, \Dbase \right)^{-1} \left( y_n \, \one\S + z \, \Tbase \, Y \right).
	\end{split}
\end{equation}
The internal ODEs become
\begin{equation*}
	v' = \lambda\F \diag{\big( \deltac \big)} \, v + \lambda\S \, \Psi \mleft( \tfrac{\theta}{H} \mright) \, Y^{\ast} + \lambda\S \, \Gamma \mleft( \tfrac{\theta}{H} \mright) \, Y.
\end{equation*}
Integrating and substituting in the predicted stages \cref{eqn:linear_predicted_stages} gives
\begin{align*}
	Y &= v(H) \\
	&= \diag{\big( \varphi_0 \big( \deltac \, z\F \big) \big)} \, L \, Y + \varphi_0 \big( \deltac_1 z\F \big) \, y_n \, e_1 + z\S \, \mu \big( z\F \big) \, Y \\
	& \quad + z\S \, \nu \big( z\F \big) \, Y^{\ast} \\
	&= \diag{\big( \varphi_0 \big( \deltac \, z\F \big) \big)} \, L \, Y + \varphi_0 \big( \deltac_1 \, z\F \big) \, y_n \, e_1 + z\S \mu \big( z\F \big) \, Y \\
	& \quad + z\S \, \nu \big( z\F \big) \left( \eye{s\S} - z \, \Dbase \right)^{-1} \left( y_n \, \one\S + z \, \Tbase \, Y \right) \\
	&= \mathfrak{M}\big( z\F, z\S \big)^{-1} \\
	& \quad \left( \varphi_0 \big( \deltac_1 \, z\F \big) \, e_1 + z\S \, \nu \big( z\F \big) \left( \eye{s\S} - z \, \Dbase \right)^{-1} \one\S \right) y_n,
\end{align*}
\fi
Applying \cref{eqn:IPC-MRI-GARK} to \cref{eqn:scalar_test_problem} gives the scalar stability function
\begin{equation}
	\label{eqn:IPC-MRI-GARK_scalar_stability}
	\begin{split}
		R\big( z\F, z\S \big) &\coloneqq e_{s\S}^T \mathfrak{M}\big( z\F, z\S \big)^{-1} \\
		& \quad \left( \varphi_0 \big( \deltac_1 \, z\F \big) \, e_1 + z\S \, \nu \big( z\F \big) \left( \eye{s\S} - z \, \Dbase \right)^{-1} \one\S \right),
	\end{split}
\end{equation}
with
\begin{align*}
	\mu\big( z\F \big) &\coloneqq \sum_{k \geq 0} \diag{\big( \varphi_{k+1}\big( \deltac \, z\F \big) \big)} \, \Gamma^k, \\
	\nu\big( z\F \big) &\coloneqq \sum_{k \geq 0} \diag{\big( \varphi_{k+1}\big( \deltac \, z\S \big) \big)} \, \Psi^k, \\
	\mathfrak{M}\big( z\F, z\S \big) &\coloneqq \eye{s\S} - \diag{\big( \varphi_0 \big( \deltac \, z\F \big) \big)} \, L - z\S \mu \big( z\F \big) \\
	& \quad - z\S \, z \, \nu \big( z\F \big) \left( \eye{s\S} - z \, \Dbase \right)^{-1} \Tbase.
\end{align*}
It can be verified that \cref{eqn:SPC-MRI-GARK_zf_lim} still holds.  As the slow part of the test problem becomes infinitely stiff, however, we would like the stability function to be bounded.  One natural way to enforce this is by ensuring $\mathfrak{M}$, and thus $\mathfrak{M}^{-1}$, remains bounded in the limit.  The last two terms in $\mathfrak{M}$ are problematic since they are $\order{z\S}$.  If $\Abase$ is invertible, the following condition ensures these terms cancel in the limit:
\begin{equation*}
	\mu \big( z\F \big) = \nu \big( z\F \big) \left(\Dbase\right)^{-1} \Tbase.
\end{equation*}
Note that $\mu$ and $\nu$ are sums over linearly independent $\varphi$ functions.  By matching terms in this summation, we arrive at the stability simplifying assumption
\begin{equation}
	\label{eqn:IPC-MRI-GARK_stability_simplify}
	\Gamma^k = \Psi^k \left(\Dbase\right)^{-1} \Tbase, \qquad \forall k \ge 0.
\end{equation}
If $\Psi$ and $\Gamma$ are degree zero polynomials, then \cref{eqn:IPC-MRI-GARK_integral_condition} automatically ensures \cref{eqn:IPC-MRI-GARK_stability_simplify} is satisfied.

\subsubsection{Matrix stability analysis}
Now we consider the component partitioned \IPCabbv{} method \cref{eqn:IPC-MRI-GARK_component} applied to the matrix test problem \cref{eqn:matrix_test_problem}.  First, we define the following intermediate quantities:
\begin{align*}
	\mathfrak{P}_1(Z) &\coloneqq \left( \eye{2s\S} - Z \otimes \Dbase \right)^{-1} \left( \eye{2} \otimes \one\S \right), \\
	\mathfrak{P}_2(Z) &\coloneqq \left( \eye{2s\S} - Z \otimes \Dbase \right)^{-1} \left( Z \otimes \Tbase \right), \\
	\~\mu \big( z\F \big) &\coloneqq \sum_{k \geq 0} \diag{\left( \frac{\deltac \times \varphi_{k+2}\big( z\F \, \deltac \big)}{k+1} \right)} \Gamma^k, \\
	\~\nu \big( z\F \big) &\coloneqq \sum_{k \geq 0} \diag{\left( \frac{\deltac \times \varphi_{k+2}\big( z\F \, \deltac \big)}{k+1} \right)} \Psi^k.
\end{align*}
\ifreport
The predictor stages become
\begin{align*}
	\begin{bmatrix}
		Y\F[\ast] \\ Y\S[\ast]
	\end{bmatrix}
	&= \mathfrak{P}_1(Z) \begin{bmatrix}
		y\F_n \\
		y\S_n
	\end{bmatrix} + \mathfrak{P}_2(Z) \begin{bmatrix}
		Y\F \\
		Y\S
	\end{bmatrix}.
\end{align*}
The fast internal ODEs become
\begin{align*}
	{v\F}' &= \lambda\F \, \diag{\big( \deltac \big)} \, v\F + \eta\S \, \diag{\big( \deltac \big)} \, L \, Y\F \\
	&\quad + \eta\S \diag{\big( \deltac \big)} \, \~\Gamma\mleft( \tfrac{\theta}{H} \mright) \left( w\F \, Y\F + z\S \, Y\S \right) \\
	&\quad + \eta\S \diag{\big( \deltac \big)} \, \~\Psi\mleft( \tfrac{\theta}{H} \mright) \left( w\F \, y\F[\ast] + z\S \, y\S[\ast] \right).
\end{align*}
The solution to the system of ODEs gives the corrector stages
\begin{align*}
	Y\F &= v\F(H) \\
	&= \diag{\big( \varphi_0 \big( z\F \, \deltac \big) \big)} \left( L \, Y\F + y\F_n \, e_1 \right) \\
	& \quad + w\S \diag{\big( \deltac \times \varphi_1 \big( z\F \, \deltac \big) \big)} \left( L \, Y\S + y\S_n \, e_1 \right) \\
	& \quad + w\S \, \~\mu \big( z\F \big) \left( w\F \, Y\F + z\S \, Y\S \right) \\
	&\quad + w\S \, \~\nu \big( z\F \big) \left( w\F \, y\F[\ast] + z\S \, y\S[\ast] \right).
\end{align*}
The combined fast and slow corrector stages are
\begin{align*}
	\begin{bmatrix} Y\F \\ Y\S \end{bmatrix}
	&= \begin{bmatrix} 
		\diag{\big( \varphi_0 \big( z\F \, \deltac \big) \big)} \, L &  w\S \diag{\big( \deltac \times \varphi_1 \big( z\F \, \deltac \big) \big)} \, L \\
		0 & 0
	\end{bmatrix} \begin{bmatrix} Y\F \\ Y\S \end{bmatrix} \\
	& \quad + \begin{bmatrix}
		\varphi_0 \big( z\F \, \deltac_1 \big) \, e_1 & w\S \deltac_1 \varphi_1 \mleft( z\F \deltac_1 \mright) e_1 \\
		0 & 0
	\end{bmatrix} \begin{bmatrix} y\F_n \\ y\S_n \end{bmatrix} \\
	& \quad + \begin{bmatrix}
		w\S \, w\F \, \~\mu \big( z\F \big) & w\S \, z\S \, \~\mu \big( z\F \big) \\
		0 & 0
	\end{bmatrix} \begin{bmatrix} Y\F \\ Y\S \end{bmatrix} \\
	& \quad + \begin{bmatrix}
		w\S \, w\F \, \~\nu \big( z\F \big) & w\S \, z\S \, \~\nu \big( z\F \big) \\
		0 & \eye{s\S}
	\end{bmatrix} \begin{bmatrix} y\F[\ast] \\ y\S[\ast] \end{bmatrix} \\
	&= \mathfrak{N}_1(Z)^{-1} \mathfrak{N}_2(Z) \begin{bmatrix} y\F_n \\ y\S_n \end{bmatrix}.
\end{align*}
\fi
The stability matrix is given by
\begin{equation}
	\label{eqn:IPC-MRI-GARK_matrix_stability}
	\*M(Z) \coloneqq \left( \eye{2} \otimes e_{s\S}^T \right) \mathfrak{N}_1(Z)^{-1} \, \mathfrak{N}_2(Z),
\end{equation}
with
\begin{align*}
	\mathfrak{N}_1(Z) &= - \begin{bmatrix}
		\diag{\big( \varphi_0 \big( z\F \, \deltac \big) \big)} \, L & w\S \diag{\big( \deltac \times \varphi_1 \big( z\F \, \deltac \big) \big)} \, L \\
		0 & 0
	\end{bmatrix} \\
	& \quad - \begin{bmatrix}
		w\S \, w\F \, \~\mu \big( z\F \big) & w\S \, z\S \, \~\mu \big( z\F \big) \\
		0 & 0
	\end{bmatrix} \\
	& \quad - \begin{bmatrix}
		w\S \, w\F \, \~\nu \big( z\F \big) & w\S \, z\S \, \~\nu \big( z\F \big) \\
		0 & \eye{s\S}
	\end{bmatrix} \mathfrak{P}_2 + \eye{2 s\S} ,\\
	\mathfrak{N}_2(Z) &= \begin{bmatrix}
		\varphi_0 \big( z\F \, \deltac_1 \big) \, e_1 & w\S \, \deltac_1 \, \varphi_1 \big( z\F \, \deltac_1 \big) \, e_1 \\
		0 & 0
	\end{bmatrix} \\
	& \quad + \begin{bmatrix}
		w\S \, w\F \, \~\nu \big( z\F \big) & w\S \, z\S \, \~\nu \big( z\F \big) \\
		0 & \eye{s\S}
	\end{bmatrix} \mathfrak{P}_1.
\end{align*}

\subsection{Construction of practical methods}

We develop new implicit \IPCabbv{} methods up to order four which are presented in \cref{sec:IPC-MRI-GARK_methods}.  The second order base methods are reused from \SPCabbv{}, but the third and fourth order base methods are custom due to the nondecreasing abscissae constraint.  Upon deriving a parameterized family of $\Gamma$ and $\Psi$ coefficients that satisfy the coupling order conditions, we use free coefficients to satisfy the stability simplifying assumption \cref{eqn:IPC-MRI-GARK_stability_simplify}.  Any remaining parameters are used to optimize the size of the stability region.  Plots of the scalar and matrix stability regions are provided in \cref{fig:IPC-MRI-GARK_scalar_stability,fig:IPC-MRI-GARK_matrix_stability}, respectively.  Compared to \SPCabbv{} methods, we found it significantly more challenging to achieve large stability regions at high orders.
\section{Numerical results}
\label{sec:numerics}

In this section, we present the numerical tests performed on the \SPCabbv{} and \IPCabbv{} methods.

\subsection{Additive partitioning: the Gray--Scott model}

The first test problem considered is the Gray--Scott reaction-diffusion PDE \cite{Pearson189}:
\begin{equation}
	\label{eqn:Gray-Scott}
	\underbrace{
		\begin{bmatrix} u \\ v \end{bmatrix}'
	}_{y'}
	=
	\underbrace{
		\begin{bmatrix} \nabla \cdot ( \varepsilon_u \, \nabla u ) \\ \nabla \cdot ( \varepsilon_v \, \nabla v ) \end{bmatrix}
	}_{f\S(y)}
	+
	\underbrace{
		\begin{bmatrix} -u \, v^2 + \mathfrak{f} \, (1-u) \\  u \, v^2 - (\mathfrak{f} + \mathfrak{k}) \, v \end{bmatrix}
	}_{f\F(y)}.
\end{equation}
It is solved over the 2D spatial domain $[0,1] \times [0,1]$, which is discretized with second order finite differences. The timespan is taken to be $[0,30]$, and the model parameters are $\varepsilon_u = 0.0625$, $\varepsilon_v = 0.0312$, $\mathfrak{k}= 0.0520$, and $\mathfrak{f}= 0.0180$. 
The linear diffusions terms of \cref{eqn:Gray-Scott} make up the slow partition while the nonlinear reaction terms make up the fast partition.

MATLAB is used to carry out the convergence experiments. ODEs that appear within the integrators are solved using \texttt{ode45} with the tolerances \texttt{abstol = reltol = 1e-10}.
Convergence diagrams for the new methods presented in \cref{sec:new_methods} are shown in \cref{fig:Gray-Scott}. The numerical orders of accuracy are consistent with theoretical orders.

\begin{figure}[ht]
	\centering
	\begin{subfigure}[b]{.48\linewidth}
		\begin{tikzpicture}
			\begin{loglogaxis}[xlabel={Steps},ylabel={Error},legend columns=2,legend style={at={(1,1.05)}, anchor=south east}]
				\orderplot{Gray-Scott/SPC.dat}
				\draw (axis cs:4e3,8e-7) -- node[above]{\scriptsize $2$} (axis cs:8e3,2e-7);
				\draw (axis cs:4e3,6e-8) -- node[above]{\scriptsize $3$} (axis cs:8e3,7.5e-9);
				\draw (axis cs:4e3,1e-10) -- node[below]{\scriptsize $4$} (axis cs:8e3,6.25e-12);
			\end{loglogaxis}
		\end{tikzpicture}
		\caption{\SPCabbv{} methods}
	\end{subfigure}
	\hfill
	\begin{subfigure}[b]{.48\linewidth}
		\begin{tikzpicture}
			\begin{loglogaxis}[xlabel={Steps},ylabel={Error},legend columns=2,legend style={at={(1,1.05)}, anchor=south east}]
				\orderplot{Gray-Scott/IPC.dat}
				\draw (axis cs:4e3,1.5e-6) -- node[above]{\scriptsize $2$} (axis cs:8e3,3.75e-7);
				\draw (axis cs:4e3,8e-9) -- node[below]{\scriptsize $3$} (axis cs:8e3,1e-9);
				\draw (axis cs:4e3,3e-10) -- node[below]{\scriptsize $4$} (axis cs:8e3,1.89e-11);
			\end{loglogaxis}
		\end{tikzpicture}
		\caption{\IPCabbv{} methods}
	\end{subfigure}

	\caption{Error vs. number of steps for the Gray--Scott problem \cref{eqn:Gray-Scott}. Reference lines are used to indicate orders.}
	\label{fig:Gray-Scott}
\end{figure}

\subsection{Component partitioning: the KPR problem}
For a component partitioned test problem of the form \cref{eqn:ode_component}, we use the KPR system \cite{Sandu_2013_extrapolatedMR} as a multi-scale extension to the scalar Prothero-Robinson \cite{Bartel_2002_MR-W,Hairer_book_II,Prothero_1974_PR} problem. We define the system as:
\begin{subequations}
	\label{eqn:PR:Nonlinear}
	\begin{equation}
		\label{eqn:PR:Nonlinear:Equation}
		\begin{bmatrix}
			y\F \\
			y\S
		\end{bmatrix}'
		= \boldsymbol{\Omega}
		\cdot
		\begin{bmatrix}
			\frac{-3+y\F[\times 2] - \cos(\omega \, t)}{2 \, y\F} \\[4pt]
			\frac{-2+y\S[\times 2] - \cos(t)}{2 \, y\S}
		\end{bmatrix}
		- 
		\begin{bmatrix}
			\frac{\omega \sin(\omega \, t)}{2 \, y\F} \\[4pt]
			\frac{\sin(t)}{2 \, y\S}
		\end{bmatrix}.
	\end{equation}
	The parameters are chosen as $\lambda\F = -10$, $\lambda\S = -1$, $\xi = 0.1$, $\alpha = 1$, and $\omega = 20$.
	The exact solution of \cref{eqn:PR:Nonlinear:Equation} is given by:
	\begin{equation}
		\label{eqn:PR:Nonlinear:Exact:Solution}
		y\F(t) = \sqrt{3+\cos(\omega \, t)}, \qquad
		y\S(t) = \sqrt{2+\cos(t)}.
	\end{equation}
\end{subequations}
The tests are performed from $t = 0$ to $t = 5 \pi / 2$ with the initial condition coming from evaluating \cref{eqn:PR:Nonlinear:Exact:Solution} at $t = 0$.  From the exact solution we can also see that the differences in the fast and slow time scales are driven by $\omega$ and not $\lambda\F$ and $\lambda\S$.

The fast integration \cref{eqn:IPC-MRI-GARK_corrector} is also carried out using \texttt{ode45} solver with \texttt{abstol = reltol = 1e-10}. The convergence diagrams reported in \cref{fig:KPR} indicate that the methods perform at their theoretical orders for this problem. 

\begin{figure}[ht]
	\centering
	\begin{subfigure}[b]{.48\linewidth}
		\begin{tikzpicture}
			\begin{loglogaxis}[xlabel={Steps},ylabel={Error},legend columns=2,legend style={at={(1,1.05)}, anchor=south east}]
				\orderplot{KPR/SPC.dat}
				\draw (axis cs:7.5e3,1e-7) -- node[above]{\scriptsize $2$} (axis cs:1.6e4,2.2e-8);
				\draw (axis cs:7.5e3,6e-11) -- node[below]{\scriptsize $3$} (axis cs:1.6e4,6.18e-12);
				\draw (axis cs:7.5e3,5e-13) -- node[below]{\scriptsize $4$} (axis cs:1.6e4,2.41e-14);
			\end{loglogaxis}
		\end{tikzpicture}
		\caption{\SPCabbv{} methods}
	\end{subfigure}
	\hfill
	\begin{subfigure}[b]{.48\linewidth}
		\begin{tikzpicture}
			\begin{loglogaxis}[xlabel={Steps},ylabel={Error},legend columns=2,legend style={at={(1,1.05)}, anchor=south east}]
				\orderplot{KPR/IPC.dat}
				\draw (axis cs:7.5e3,2e-8) -- node[above]{\scriptsize $2$} (axis cs:1.6e4,4.39e-9);
				\draw (axis cs:7.5e3,8e-12) -- node[below]{\scriptsize $3$} (axis cs:1.6e4,8.24e-13);
				\draw (axis cs:2.5e3,1.5e-13) -- node[below]{\scriptsize $4$} (axis cs:4.7e3,1.2e-14);
			\end{loglogaxis}
		\end{tikzpicture}
		\caption{\IPCabbv{} methods}
	\end{subfigure}
	
	\caption{Error vs. number of steps for the KPR problem \cref{eqn:PR:Nonlinear}. Reference lines are used to indicate orders.}
	\label{fig:KPR}
\end{figure}

\subsection{Multirate performance: the inverter chain problem}

We also consider the inverter chain model of \cite{Kvaerno_1999_MR-RK} given by the equations
\begin{equation} \label{eqn:inverter_chain}
	\begin{split}
		U'_1 &= U_{op} - U_1 - \Gamma \, g(U_{in}, U_1, U_0), \\
		U'_i &= U_{op} - U_i - \Gamma \, g(U_{i-1}, U_i, U_0), \qquad i = 2, \dots m,
	\end{split}
\end{equation}
with $U_0 = 0$, $U_{op} = 5$, $U_T = 1$, $\Gamma = 100$, and
\begin{align*}
	g(U_G, U_D, U_S) = (\max(U_G - U_S - U_T, 0))^2 - (\max(U_G - U_D - U_T, 0))^2.
\end{align*}
The initial conditions of the system are
\begin{align*}
	U_i(0) = \begin{cases}
		6.246 \times 10^{-3} & i \text{ even} \\
		5 & i \text{ odd}
	\end{cases},
\end{align*}
and the input signal is taken to be
\begin{align*}
	U_{in}(t) = \begin{cases}
		t - 5 & 5 \le t \le 10 \\
		5 & 10 \le t \le 15 \\
		\frac{5}{2} (17 - t) & 15 \le t \le 17 \\
		0 & \text{otherwise}
	\end{cases}.
\end{align*}
For the numerical experiments, we use $m = 500$ and a timespan of $[0, 100]$.  As the signal propagates through the circuit, only a small percentage of the inverters experience a change in voltage while the other inverters maintain a constant voltage.  A componentwise partitioning of \cref{eqn:inverter_chain} is used where the fast components come from a sliding window that follows the signal, and the remaining components form the slow partition.

A C implementation of \cref{eqn:inverter_chain} is used to measure the performance gains provided by \SPCabbv{} and \IPCabbv{} over a single rate base method of the same order.  For order two, we compare SPC SDIRK2(1)2 from \cref{sec:SPC-MRI-GARK_methods:SDIRK2(1)2} and IPC SDIRK2(1)2 from \cref{sec:IPC-MRI-GARK_methods:SDIRK2(1)2} to their shared base method SDIRK2(1)2.  The results are plotted in \cref{fig:inverter_chain_work_precision:2}.  \Cref{fig:inverter_chain_work_precision:3} compares SPC SDIRK3(2)4 from \cref{sec:SPC-MRI-GARK_methods:SDIRK3(2)4} to its base method SDIRK3(2)4 and to IPC SDIRK3(2)5 from \cref{sec:IPC-MRI-GARK_methods:SDIRK3(2)5}.  Finally, \cref{fig:inverter_chain_work_precision:4} compares SPC ESDIRK4(3)6 from \cref{sec:SPC-MRI-GARK_methods:ESDIRK4(3)6} to its base method ESDIRK4(3)6 and to SPC SDIRK4(3)5 from \cref{sec:SPC-MRI-GARK_methods:SDIRK4(3)5}.  We did not include results for IPC SDIRK4(3)6 as it was only stable for timesteps much smaller than those used for the SPC methods.  This observation is consistent with the stability regions presented in \cref{fig:IPC-MRI-GARK_scalar_stability,fig:IPC-MRI-GARK_matrix_stability}.

Fixed timesteps were used for the coupled MRI-GARK methods, as well as for the method ESDIRK5(4)7[2]SA\textsubscript{2} from \cite{KENNEDY2019221} used to solve the internal ODEs.  In the experiments, ten timesteps were taken to solve these internal ODEs, except for IPC SDIRK2(1)2 and SPC SDIRK3(2)4 where five and 15 steps were used, respectively.

\begin{figure}[ht]
	\centering
	\begin{subfigure}[t]{0.46\linewidth}
		\perfplot{Inverter-Chain/order_2.dat}{SDIRK2(1)2,SPC SDIRK2(1)2,IPC SDIRK2(1)2}
		\caption{Second order methods}
		\label{fig:inverter_chain_work_precision:2}
	\end{subfigure}
	\hfil
	\begin{subfigure}[t]{0.46\linewidth}
		\perfplot{Inverter-Chain/order_3.dat}{SDIRK3(2)4,SPC SDIRK3(2)4,IPC SDIRK3(2)5}
		\caption{Third order methods}
		\label{fig:inverter_chain_work_precision:3}
	\end{subfigure}
	\\
	\begin{subfigure}[t]{0.46\linewidth}
		\perfplot{Inverter-Chain/order_4.dat}{ESDIRK4(3)6,SPC ESDIRK4(3)6,SPC SDIRK4(3)5}
		\caption{Fourth order methods}
		\label{fig:inverter_chain_work_precision:4}
	\end{subfigure}
	\caption{Work precision diagrams for single rate, \SPCabbv{}, and \IPCabbv{} methods applied to the inverter chain problem \cref{eqn:inverter_chain}.}
	\label{fig:inverter_chain_work_precision}
\end{figure}

In all of the performance results presented in \cref{fig:inverter_chain_work_precision}, the multirate methods are able to achieve a desired accuracy in significantly less time than the single rate schemes.  The best results occurred at order two where the speedup ranged from 8 to 60.  This can be attributed to the excellent multirate characteristics of the inverter chain problem as well as the flexibility of mutirate infinitesimal methods to use any method to solve the modified fast ODEs.
\section{Conclusions and future work}
\label{sec:conclusions}

This work extends the class of multirate infinitesimal GARK schemes developed in \cite{Sandu_2018_MRI-GARK} to include coupled methods. Such methods compute (some of) the stages by solving implicit systems that involve both the fast and the slow components, which gives their ``coupled'' character. The coupled approach allows us to construct multirate infinitesimal schemes with improved stability for stiff systems with multiple scales, at the additional cost of solving more complex, or larger, nonlinear systems.

Two approaches to formulating the coupling are studied herein. Both of them employ a predictor-corrector structure.
The first approach, named \SPCMethod{}, starts with computing all predictor stages in a coupled fashion. The predicted stages are then used to formulate a modified fast ODE, and a single infinitesimal integration is carried out to correct the fast component of the system. 
The second approach, named \IPCMethod{}, alternates prediction and correction stages. Specifically, each discrete predictor stage is followed by a corrector stage, which integrates a modified fast ODE system and corrects the fast components of that stage.

Elegant formulations of the order conditions for both families of methods are developed, and stability requirements for practical methods are analyzed. Methods of order up to four are constructed. Numerical tests verify the orders of convergence on additive and component partitioned cases. Finally, we demonstrate computational efficiency of these multirate methods when compared to their single rate counterparts. Our numerical experiments indicate a performance edge for both MRI-GARK strategies compared to single rate ones, with \SPCabbv{} methods having slightly better performance in general. Our analysis also shows larger stability regions for \SPCabbv{} methods compared to \IPCabbv{}.  

The succession of discrete and infinitesimal integration stages in the \IPCabbv{} schemes requires non-decreasing abscissae for the slow base method. This requirement, in conjunction with stability, can become difficult to satisfy for high order methods. One solution to alleviate this restriction is to construct MRI-GARK methods that compute their own initial conditions for the infinitesimal stage integration. The authors plan to study these extensions in future works.

\bibliographystyle{siamplain}
\bibliography{Bib/ode_general,Bib/ode_multirate,Bib/sandu,Bib/misc}

\appendix
\section{New MRI-GARK methods}
\label{sec:new_methods}

Here, we present the newly derived \SPCabbv{} and \IPCabbv{} methods.  In some cases, the exact representation of the method coefficients is too long to fit on a page, so the first 16 digits are provided.  Exact coefficients \ifreport \else and matrix stability plots for additional angles \fi are available in the supplementary materials.  The stability regions are computed according to \cite{Sandu_2018_MRI-GARK}:
\begin{align*}
	\scalarstab &= \big\{ z\S \in \mathbb{C} \, \big| \, \big|R \big( z\F, z\S \big)\big| \le 1, \, \forall z\F \in \mathbb{C}^- : \big| z\F \big| \le \rho, \big|\text{arg}{\big( z\F \big)} - \pi \big| \le \alpha \big\}, \\
	\matstab &= \big\{ z\S \in \mathbb{C} \, \big| \, \max \big| \text{eig} \, \*M\big( z\F, z\F \big) \big| \le 1, \\
	& \qquad \forall z\F \in \mathbb{C}^- : \big| z\F \big| \le \rho, \big|\text{arg}{\big( z\F \big)} - \pi \big| \le \alpha \big\}.
\end{align*}

\subsection{\SPCabbv{} methods}
\label{sec:SPC-MRI-GARK_methods}

We use the following tableau representation for SPC-MRI-GARK methods:
\begin{equation*}
	\begin{rktableau}{c|ccc}
		\cbase_1 & \abase_{1,1} & \ldots & \abase_{1,s\S} \\
		\vdots & \vdots & \ddots & \vdots \\
		\cbase_{s\S} & \abase_{s\S,1} & \ldots & \abase_{s\S,s\S} \\ \hline
		& \gamma_1(t) & \ldots & \gamma_{s\S}(t) \\ \hline
		& \^\gamma_1(t) & \ldots & \^\gamma_{s\S}(t)
	\end{rktableau}.
\end{equation*}

\subsubsection{SDIRK2(1)2}
\label{sec:SPC-MRI-GARK_methods:SDIRK2(1)2}
This method is based on the two stage, second order method in \cite{Alexander_1977_SDIRK}.
\begin{equation*}
	\begin{rktableau}{c|cc}
		{\scriptstyle 1-}\frac{1}{\sqrt{2}} & {\scriptstyle 1-}\frac{1}{\sqrt{2}} & {\scriptstyle 0} \\
		{\scriptstyle 1} & \frac{1}{\sqrt{2}} & {\scriptstyle 1-}\frac{1}{\sqrt{2}} \\
		\hline
		\text{} & {\scriptstyle \left(12-9 \sqrt{2}\right) t+5 \sqrt{2}-6} & {\scriptstyle \left(9 \sqrt{2}-12\right) t-5 \sqrt{2}+7} \\
		\hline
		\text{} & \left(\frac{78}{5} {\scriptstyle - 12 \sqrt{2}}\right) {\scriptstyle t+6 \sqrt{2} -} \frac{36}{5} & \left({\scriptstyle 12 \sqrt{2} -} \frac{78}{5}\right) {\scriptstyle t-6 \sqrt{2} +}\frac{41}{5} \\
	\end{rktableau}
\end{equation*}

\subsubsection{ESDIRK2(1)3}
\label{sec:SPC-MRI-GARK_methods:ESDIRK2(1)3}
This method is based on TR-BDF2 in \cite{bank1985transient}.
\begin{equation*}
	\begin{rktableau}{c|ccc}
			{\scriptstyle 0} & {\scriptstyle 0} & {\scriptstyle 0} & {\scriptstyle 0} \\
			{\scriptstyle 2-\sqrt{2}} & {\scriptstyle 1-}\frac{1}{\sqrt{2}} & {\scriptstyle 1-}\frac{1}{\sqrt{2}} & {\scriptstyle 0} \\
			{\scriptstyle 1} & \frac{1}{2 \sqrt{2}} & \frac{1}{2 \sqrt{2}} & {\scriptstyle 1-}\frac{1}{\sqrt{2}} \\
			\hline
			\text{} & \left({\scriptstyle 6-}\frac{9}{\sqrt{2}}\right) {\scriptstyle t+}\frac{5}{\sqrt{2}}{\scriptstyle -3} & \left({\scriptstyle 6-}\frac{9}{\sqrt{2}}\right) {\scriptstyle t+}\frac{5}{\sqrt{2}}{\scriptstyle -3} & {\scriptstyle \left(9 \sqrt{2}-12\right) t-5 \sqrt{2}+7} \\
			\hline
			\text{} & \left(\frac{39}{5}{\scriptstyle -6 \sqrt{2}}\right) {\scriptstyle t +3 \sqrt{2}-}\frac{18}{5} & \left(\frac{39}{5}{\scriptstyle -6 \sqrt{2}}\right) {\scriptstyle t +3 \sqrt{2}-}\frac{18}{5} & \left({\scriptstyle 12 \sqrt{2}-}\frac{78}{5}\right) {\scriptstyle t-6 \sqrt{2}+}\frac{41}{5}
	\end{rktableau}
\end{equation*}

\subsubsection{SDIRK3(2)4}
\label{sec:SPC-MRI-GARK_methods:SDIRK3(2)4}
This method is based on SDIRK3M in \cite{Kennedy_2016_SDIRK-review}.
\begin{equation*}
	\begin{rktableau}{c|cccc}
		\frac{9}{40} & \frac{9}{40} & {\scriptstyle 0} & {\scriptstyle 0} & {\scriptstyle 0} \\
		\frac{7}{13} & \frac{163}{520} & \frac{9}{40} & {\scriptstyle 0} & {\scriptstyle 0} \\
		\frac{11}{15} & -\frac{6481433}{8838675} & \frac{87795409}{70709400} & \frac{9}{40} & {\scriptstyle 0} \\
		{\scriptstyle 1} & \frac{4032}{9943} & \frac{6929}{15485} & -\frac{723}{9272} & \frac{9}{40} \\
		\hline
		\text{} & -\frac{21765 t}{9943} & \frac{18740344238109 t}{12407262101200} & -\frac{2318739807 t}{928641703280} & \frac{341049771 t}{500777450} \\ [-0.2em]
		& +\frac{3}{2} & -\frac{46850957023}{152236344800} & -\frac{2336165553}{30447268960} & -\frac{231399837}{2003109800} \\
		\hline
		\text{} & -\frac{458 t}{153} & \frac{1143703567597 t}{484654507050} & \frac{12128361703356241349 t}{41321158297274157120} & \frac{6985915649614123877 t}{20539757048352651200} \\ [-0.2em]
		& +\frac{17}{9} & -\frac{5}{7} & -\frac{3214490524810792571}{14788625074813908864} & +\frac{70261070970241507}{1643180563868212096}
	\end{rktableau}
\end{equation*}

\subsubsection{ESDIRK3(2)4}
\label{sec:SPC-MRI-GARK_methods:ESDIRK3(2)4}
This method is based on the optimal four stage, third order ESDIRK method described in \cite{Kennedy_2016_SDIRK-review}.
\begin{equation*}
	\resizebox{\linewidth}{!}{$
		\begin{rktableau}{c|cccc}
			0 & 0 & 0 & 0 & 0 \\
			0.8717330430169180 & 0.4358665215084590 & 0.4358665215084590 & 0 & 0 \\
			0.6089666303771147 & 0.2648804871412033 & -0.09178037827254760 & 0.4358665215084590 & 0 \\
			1.000000000000000 & 0.1921013555637903 & -0.6181218831132021 & 0.9901540060409528 & 0.4358665215084590 \\
			\hline
			\text{} & 0.2335954530133717 t & 3.847836453450424 t & -4.416875540651942 t & 0.24354436341881466 t \\ [-0.5em]
			& +0.07530362905710443 & -2.542040109838414 & +3.198591776366924 & +0.2681447044143857 \\
			\hline
			\text{} & -0.5331294033713856 t & 0.1096316239241135 t & -0.7855025327869668 t & 1.209000312234239 t \\ [-0.5em]
			& +0.24812962236875004 & -1.000000000000000 & +1.688048335476923 & -0.06934455916442332
		\end{rktableau}
	$}
\end{equation*}

\subsubsection{SDIRK4(3)5}
\label{sec:SPC-MRI-GARK_methods:SDIRK4(3)5}
This method is based on SDIRK4M in \cite{Kennedy_2016_SDIRK-review}.
\begin{equation*}
	\begin{rktableau}{c|ccccc}
			\frac{1}{4} & \frac{1}{4} & {\scriptstyle 0} & {\scriptstyle 0} & {\scriptstyle 0} & {\scriptstyle 0} \\
			\frac{9}{10} & \frac{13}{20} & \frac{1}{4} & {\scriptstyle 0} & {\scriptstyle 0} & {\scriptstyle 0} \\
			\frac{2}{3} & \frac{580}{1287} & -\frac{175}{5148} & \frac{1}{4} & {\scriptstyle 0} & {\scriptstyle 0} \\
			\frac{3}{5} & \frac{12698}{37375} & -\frac{201}{2990} & \frac{891}{11500} & \frac{1}{4} & {\scriptstyle 0} \\
			{\scriptstyle 1} & \frac{944}{1365} & -\frac{400}{819} & \frac{99}{35} & -\frac{575}{252} & \frac{1}{4} \\
			\hline
			\text{} & \frac{487}{273}-\frac{142 t}{65} & -\frac{125 t}{182}-\frac{475}{3276} & \frac{297 t}{140}+\frac{99}{56} & -\frac{575}{252} & \frac{3 t}{4}-\frac{1}{8} \\
			\hline
			\text{} & \frac{357179 t}{270270} & \frac{222331 t}{72072} & \frac{1135934341 t}{442769040} & -\frac{11524110095 t}{1461137832} & \frac{636740663 t}{695779920} \\ [-0.2em]
			& +\frac{1}{27} & -\frac{17}{8} & +\frac{110483689}{63252720} & +\frac{28581755}{18975816} & -\frac{10434149}{63252720}
	\end{rktableau}
\end{equation*}

\subsubsection{ESDIRK4(3)6}
\label{sec:SPC-MRI-GARK_methods:ESDIRK4(3)6}
This method is based on ESDIRK4(3)6L[2]SA in \cite{Kennedy_2016_SDIRK-review} and has the property that the first and second column of coefficients are identical.
\begin{equation*}
	\resizebox{\linewidth}{!}{$
		\begin{rktableau}{c|cccccc}
			0 & 0 & 0 & 0 & 0 & 0 & 0 \\
			\frac{1}{2} & a\SS_{2,1} & 0.2500000000000000 & 0 & 0 & 0 & 0 \\
			\frac{2-\sqrt{2}}{4} & a\SS_{3,1} & -0.05177669529663688 & 0.2500000000000000 & 0 & 0 & 0 \\
			\frac{5}{8} & a\SS_{4,1} & -0.07655460838455727 & 0.5281092167691145 & 0.2500000000000000 & 0 & 0 \\
			\frac{26}{25} & a\SS_{5,1} & -0.7274063478261298 & 1.584995061740679 & 0.6598176339115803 & 0.2500000000000000 & 0 \\
			1 & a\SS_{6,1} & -0.01558763503571650 & 0.24876576709132033 & 0.5017726195721632 & -0.1082550204139335 & 0.2500000000000000 \\
			\hline
			\text{} & \gamma_1(t) & -6.163979155637189 t & 8.775315341826407 t & 2.197069503808978 t & 1.703312350342134 t & -0.24477388847031400 t \\ [-0.5em]
			& & +3.066401942782878 & -4.000000000000000 & -0.5967621323323260 & -0.9599111955850004 & +0.4238694423515700 \\
			\hline
			\text{} & \^\gamma_1(t) & -4.935764673620373 t & 7.151127236629060 t & 1.151758875793870 t & 3.303286684519598 t & -1.734643449701781 t \\ [-0.5em]
			& & +2.375000000000000 & -3.058823529411765 & -0.05607965938087753 & -1.734976675593132 & +1.099879864385774
		\end{rktableau}
	$}
\end{equation*}

\subsubsection{Stability plots}

\Cref{fig:SPC-MRI-GARK_scalar_stability,fig:SPC-MRI-GARK_matrix_stability} show the scalar and matrix stability regions, respectively.

\begin{figure}[ht!]
	\centering
	\begin{subfigure}{0.5\linewidth}
		\includegraphics[width=\linewidth]{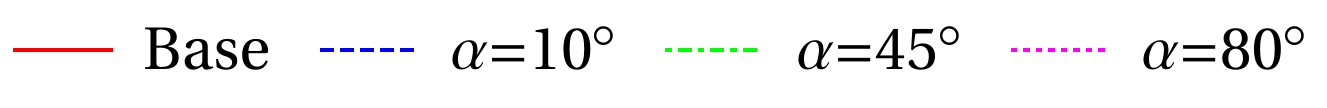}
	\end{subfigure}
	\\
	\scalarstabplot{SPC_SDIRK_2(1)2}{SDIRK2(1)2}
	\hfil
	\scalarstabplot{SPC_ESDIRK_2(1)3}{ESDIRK2(1)3}
	\hfil
	\scalarstabplot{SPC_SDIRK_3(2)4}{SDIRK3(2)4}
	\hfil
	\scalarstabplot{SPC_ESDIRK_3(2)4}{ESDIRK3(2)4}
	\hfil
	\scalarstabplot{SPC_SDIRK_4(3)5}{SDIRK4(3)5}
	\hfil
	\scalarstabplot{SPC_ESDIRK_4(3)6}{ESDIRK4(3)6}
	\caption{Scalar stability regions $\scalarstab[\infty]$ for \SPCabbv{} methods}
	\label{fig:SPC-MRI-GARK_scalar_stability}
\end{figure}

\begin{figure}[ht!]
	\centering
	\begin{subfigure}{0.65\linewidth}
		\includegraphics[width=\linewidth]{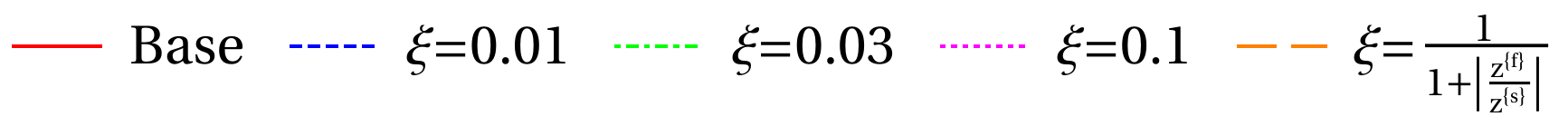}
	\end{subfigure}
	\\
	\matstabplot{SPC_SDIRK_2(1)2}{SDIRK2(1)2}
	\hfil
	\matstabplot{SPC_ESDIRK_2(1)3}{ESDIRK2(1)3}
	\hfil
	\matstabplot{SPC_SDIRK_3(2)4}{SDIRK3(2)4}
	\ifreport
		\end{figure}
		\begin{figure} \ContinuedFloat
	\else
		\hfil
	\fi
	\matstabplot{SPC_ESDIRK_3(2)4}{ESDIRK3(2)4}
	\hfil
	\matstabplot{SPC_SDIRK_4(3)5}{SDIRK4(3)5}
	\hfil
	\matstabplot{SPC_ESDIRK_4(3)6}{ESDIRK4(3)6}
	\caption{Matrix stability regions $\matstab[\infty][\ifreport \alpha \else \ang{45} \fi]$ for \SPCabbv{} methods}
	\label{fig:SPC-MRI-GARK_matrix_stability}
\end{figure}

\clearpage

\subsection{\IPCabbv{} methods}
\label{sec:IPC-MRI-GARK_methods}

We will using the following tableau representation for IPC-MRI-GARK methods:
\begin{equation*}
	\resizebox{\linewidth}{!}{$
		\begin{rktableau}{c|cccc|cccc}
			\cbase_1 & & & & & \psi_{1,1}(t) \\
			\cbase_2 & \gamma(t)_{2,1} & & & & \psi_{2,1}(t) & \psi_{2,2}(t) \\
			\vdots & \vdots & \ddots & & & \vdots & \vdots & \ddots \\
			\cbase_{s\S} & \gamma(t)_{s\S,1} & \ldots & \gamma(t)_{s\S,s\S-1} & & \psi_{s\S,1}(t) & \psi_{s\S,2}(t) & \ldots & \psi_{s\S,s\S}(t) \\ \hline
			& \^\gamma_1(t) & \ldots & \^\gamma_{s\S-1}(t) & 0 & \^\psi_1(t) & \^\psi_2(t) & \ldots & \^\psi_{s\S}(t)
		\end{rktableau}.
	$}
\end{equation*}

\subsubsection{SDIRK2(1)2}
\label{sec:IPC-MRI-GARK_methods:SDIRK2(1)2}
This method is based on the two stage, second order method in \cite{Alexander_1977_SDIRK}.
\begin{equation*}
	\begin{rktableau}{c|cc|cc}
		{\scriptstyle 1-}\frac{1}{\sqrt{2}} & {\scriptstyle 0} & {\scriptstyle 0} & {\scriptstyle 1-}\frac{1}{\sqrt{2}} & {\scriptstyle 0} \\
		{\scriptstyle 1} & \frac{1}{\sqrt{2}} & {\scriptstyle 0} & \frac{1}{\sqrt{2}}{\scriptstyle -1} & {\scriptstyle 1-}\frac{1}{\sqrt{2}} \\
		\hline
		\text{} & \frac{3}{5} & {\scriptstyle 0} & \frac{1}{\sqrt{2}}{\scriptstyle -1} & \frac{2}{5} \\
	\end{rktableau}
\end{equation*}

\subsubsection{ESDIRK2(1)3}
\label{sec:IPC-MRI-GARK_methods:ESDIRK2(1)3}
This method is based on TR-BDF2 in \cite{bank1985transient}.
\begin{equation*}
	\begin{rktableau}{c|ccc|ccc}
		{\scriptstyle 0} & {\scriptstyle 0} & {\scriptstyle 0} & {\scriptstyle 0} & {\scriptstyle 0} & {\scriptstyle 0} & {\scriptstyle 0} \\
		{\scriptstyle 2-\sqrt{2}} & {\scriptstyle 1-}\frac{1}{\sqrt{2}} & {\scriptstyle 0} & {\scriptstyle 0} & {\scriptstyle 0} & {\scriptstyle 1-}\frac{1}{\sqrt{2}} & {\scriptstyle 0} \\
		{\scriptstyle 1} & \frac{3}{2 \sqrt{2}}{\scriptstyle -1} & \frac{1}{2 \sqrt{2}} & {\scriptstyle 0} & {\scriptstyle 0} & \frac{1}{\sqrt{2}}{\scriptstyle -1} & {\scriptstyle 1-}\frac{1}{\sqrt{2}} \\
		\hline
		\text{} & \frac{1}{\sqrt{2}}-\frac{7}{10} & \frac{3}{10} & {\scriptstyle 0} & {\scriptstyle 0} & \frac{1}{\sqrt{2}}{\scriptstyle -1} & \frac{2}{5} \\
	\end{rktableau}
\end{equation*}

\subsubsection{SDIRK3(2)5}
\label{sec:IPC-MRI-GARK_methods:SDIRK3(2)5}

\begin{equation*}
	\begin{rktableau}{c|ccccc|ccccc}
		\frac{7}{40} & {\scriptstyle 0} & {\scriptstyle 0} & {\scriptstyle 0} & {\scriptstyle 0} & {\scriptstyle 0} & \frac{7}{40} & {\scriptstyle 0} & {\scriptstyle 0} & {\scriptstyle 0} & {\scriptstyle 0} \\
		\frac{1}{3} & \frac{19}{120} & {\scriptstyle 0} & {\scriptstyle 0} & {\scriptstyle 0} & {\scriptstyle 0} & -\frac{7}{40} & \frac{7}{40} & {\scriptstyle 0} & {\scriptstyle 0} & {\scriptstyle 0} \\
		\frac{1}{3} & \frac{1}{10} & -\frac{1}{10} & {\scriptstyle 0} & {\scriptstyle 0} & {\scriptstyle 0} & {\scriptstyle 0} & -\frac{7}{40} & \frac{7}{40} & {\scriptstyle 0} & {\scriptstyle 0} \\
		{\scriptstyle 1} & \frac{17341}{182400} & -\frac{73}{70} & \frac{687111}{425600} & {\scriptstyle 0} & {\scriptstyle 0} & {\scriptstyle 0} & {\scriptstyle 0} & -\frac{7}{40} & \frac{7}{40} & {\scriptstyle 0} \\
		{\scriptstyle 1} & -\frac{21487}{60800} & \frac{1618427}{1702400} & -\frac{1144471}{1702400} & \frac{3}{40} & {\scriptstyle 0} & {\scriptstyle 0} & {\scriptstyle 0} & {\scriptstyle 0} & -\frac{7}{40} & \frac{7}{40} \\
		\hline
		\text{} & \frac{2833}{60800} & -\frac{9}{35} & \frac{17257}{425600} & \frac{1}{6} & {\scriptstyle 0} & {\scriptstyle 0} & {\scriptstyle 0} & {\scriptstyle 0} & -\frac{7}{40} & \frac{107}{600} \\
	\end{rktableau}
\end{equation*}

\subsubsection{SDIRK4(3)6}
\label{sec:IPC-MRI-GARK_methods:ESDIRK4(3)6}

\begin{equation*}
	\resizebox{\linewidth}{!}{$
		\begin{rktableau}{c|cccccc|cccccc}
			\frac{1}{5} & {\scriptstyle 0} & {\scriptstyle 0} & {\scriptstyle 0} & {\scriptstyle 0} & {\scriptstyle 0} & {\scriptstyle 0} & \frac{1}{5} & {\scriptstyle 0} & {\scriptstyle 0} & {\scriptstyle 0} & {\scriptstyle 0} & {\scriptstyle 0} \\
			\frac{1}{4} & -\frac{73 t }{70} & {\scriptstyle 0} & {\scriptstyle 0} & {\scriptstyle 0} & {\scriptstyle 0} & {\scriptstyle 0} & \frac{73 t }{14} & -\frac{146 t }{35} & {\scriptstyle 0} & {\scriptstyle 0} & {\scriptstyle 0} & {\scriptstyle 0} \\ [-0.2em]
			& +\frac{4}{7} & & & & & & -\frac{393}{140} & +\frac{16}{7} & & & & \\
			\frac{1}{2} & -\frac{2592641 t }{425250} & \frac{32 t }{7} & {\scriptstyle 0} & {\scriptstyle 0} & {\scriptstyle 0} & {\scriptstyle 0} & \frac{454241 t }{85050} & -\frac{714082 t }{212625} & -\frac{16 t }{35} & {\scriptstyle 0} & {\scriptstyle 0} & {\scriptstyle 0} \\ [-0.2em]
			& +\frac{2253133}{425250} & -\frac{30}{7} & & & & & -\frac{454241}{170100} & +\frac{314516}{212625} & +\frac{3}{7} \\
			\frac{1}{2} & -\frac{79813 t }{26425} & \frac{417821 t }{79275} & -\frac{180296 t }{237825} & {\scriptstyle 0} & {\scriptstyle 0} & {\scriptstyle 0} & -\frac{23293 t }{1134} & \frac{20473 t }{945} & -\frac{2891 t }{9720} & -\frac{22537 t }{9720} & {\scriptstyle 0} & {\scriptstyle 0} \\ [-0.2em]
			& -\frac{5}{14} & -\frac{5}{6} & +\frac{4}{9} & & & & +\frac{23293}{2268} & -\frac{20473}{1890} & -\frac{997}{19440} & +\frac{5285}{3888} \\
			\frac{3}{4} & -\frac{1709523149 t }{68615910} & \frac{8462196 t }{449225} & \frac{2352991367 t }{1035014400} & \frac{180121 t }{143616} & {\scriptstyle 0} & {\scriptstyle 0} & \frac{7713555547 t }{310789710} & -\frac{1703745478 t }{70634025} & -\frac{3353446993 t }{1130144400} & \frac{137392139 t }{32289840} & \frac{360242 t }{639485} & {\scriptstyle 0} \\ [-0.2em]
			& +\frac{6626912}{467775} & -\frac{81}{7} & -\frac{8}{9} & -\frac{2}{11} & & & -\frac{7713555547}{621579420} & +\frac{851872739}{70634025} & +\frac{3353446993}{2260288800} & -\frac{30061615}{12915936} & -\frac{52224}{639485} \\
			{\scriptstyle 1} & \frac{1646963990099 t }{204132332250} & -\frac{78294288 t }{7636825} & \frac{49839881579 t }{17595244800} & -\frac{5075915 t }{2441472} & \frac{152 t }{165} & {\scriptstyle 0} & \frac{9215792648141 t }{449091130950} & -\frac{349735368626 t }{14580880875} & \frac{115392839939 t }{653223463200} & \frac{61269407807 t }{18663527520} & \frac{15316074 t }{10871245} & -\frac{76 t }{85} \\ [-0.2em]
			& -\frac{796870764337}{204132332250} & +\frac{113260367}{22910475} & -\frac{28671224497}{17595244800} & +\frac{4299217}{2441472} & -\frac{2}{3} & & -\frac{9215792648141}{898182261900} & +\frac{174867684313}{14580880875} & -\frac{9832286}{10871245} & -\frac{115392839939}{1306446926400} & -\frac{61269407807}{37327055040} & +\frac{11}{17} \\
			\hline
			\text{} & \frac{694507614551 t }{96062274000} & -\frac{9882343 t }{1078140} & \frac{13007509307 t }{4968069120} & -\frac{1639519 t }{940032} & \frac{49 t }{99} & {\scriptstyle 0} & \frac{13988077 t }{680400} & -\frac{10360601 t }{425250} & \frac{2 t }{15} & \frac{11 t }{3} & \frac{27 t }{20} & -\frac{7 t }{9} \\ [-0.2em]
			& -\frac{669461750351}{192124548000} & +\frac{9560707}{2156280} & -\frac{14640287027}{9936138240} & +\frac{2715895}{1880064} & -\frac{35}{99} & & -\frac{13988077}{1360800} & +\frac{10360601}{850500} & -\frac{1}{15} & -\frac{11}{6} & -\frac{7}{8} & +\frac{5}{9}
		\end{rktableau}
	$}
\end{equation*}

\subsubsection{Stability plots}
\Cref{fig:IPC-MRI-GARK_scalar_stability,fig:IPC-MRI-GARK_matrix_stability} show the scalar and matrix stability regions, respectively.

\begin{figure}[ht!]
	\centering
	\begin{subfigure}{0.5\linewidth}
		\includegraphics[width=\linewidth]{Legends/scalar_stability}
	\end{subfigure}
	\\
	\scalarstabplot{IPC_SDIRK_2(1)2}{SDIRK2(1)2}
	\hfil
	\scalarstabplot{IPC_ESDIRK_2(1)3}{ESDIRK2(1)3}
	\hfil
	\scalarstabplot{IPC_SDIRK_3(2)5}{SDIRK3(2)5}
	\hfil
	\scalarstabplot{IPC_SDIRK_4(3)6}{SDIRK4(3)6}
	\caption{Scalar stability regions $\scalarstab[\infty]$ for \IPCabbv{} methods}
	\label{fig:IPC-MRI-GARK_scalar_stability}
\end{figure}

\begin{figure}[ht!]
	\centering
	\begin{subfigure}{0.65\linewidth}
		\includegraphics[width=\linewidth]{Legends/matrix_stability}
	\end{subfigure}
	\\
	\matstabplot{IPC_SDIRK_2(1)2}{SDIRK2(1)2}
	\hfil
	\matstabplot{IPC_ESDIRK_2(1)3}{ESDIRK2(1)3}
	\hfil
	\matstabplot{IPC_SDIRK_3(2)5}{SDIRK3(2)5}
	\hfil
	\matstabplot{IPC_SDIRK_4(3)6}{SDIRK4(3)6}
	\caption{Matrix stability regions $\matstab[\infty][\ifreport \alpha \else \ang{45} \fi]$ for \IPCabbv{} methods}
	\label{fig:IPC-MRI-GARK_matrix_stability}
\end{figure}

\end{document}